\newtheorem{theorem}{Theorem}[section]
\newtheorem{lemma}[theorem]{Lemma}
\newtheorem{proposition}[theorem]{Proposition}
\newtheorem{definition}[theorem]{Definition}
\newtheorem{example}[theorem]{Example}
\newtheorem{hypothesis}[theorem]{Hypothesis}
\newtheorem{remark}[theorem]{Remark}
\newtheorem{corollary}[theorem]{Corollary}
\numberwithin{equation}{section}
\def\sqr#1#2{{\vcenter{\vbox{\hrule height .#2pt \hbox{\vrule
 width .#2pt height#1pt \kern#1pt \vrule
width .#2pt} \hrule height .#2pt}}}}
\def\ds{\begin{displaystyle}}
\def\eds{\end{displaystyle}}
\def\<{\langle }
\def\>{\rangle }
\def\R{\mathbb R}
\def\N{\mathbb N}
\begin{document}

\title[Regularity results for non-linear Young equations]{Regularity results for non-linear Young equations and applications}

\keywords{Non-linear Young equations, mild and classical solutions, integral solutions, semigroups of bounded operators, invariance property}
\author[D. Addona]{Davide Addona}
\address{D.A.: Dipartimento di Scienze Matematiche, Fisiche e Informatiche, Plesso di Matema\-ti\-ca, Universit\`a degli Studi di Parma, Viale Parco Area delle Scienze 53/A, I-43124 Parma, Italy}
\author[L. Lorenzi]{Luca Lorenzi}
\address{L.L.: Dipartimento di Scienze Matematiche, Fisiche e Informatiche, Plesso di Matema\-ti\-ca, Universit\`a degli Studi di Parma, Viale Parco Area delle Scienze 53/A, I-43124 Parma, Italy}
\author[G. Tessitore]{Gianmario Tessitore}
\address{G.T.: Dipartimento di Matematica e Applicazioni, Universit\`a degli Studi di Milano-Bicocca, Milano, Via R. Cozzi 53, I-20126 Milano Italy}
\email{davide.addona@unipr.it; https://orcid.org/0000-0002-6372-0334}
\email{luca.lorenzi@unipr.it; https://orcid.org/0000-0001-6276-5779}
\email{gianmario.tessitore@unimib.it; https://orcid.org/0000-0001-9893-3703}
\subjclass[2010]{Primary: 35R60;  Secondary: 60H05, 60H15, 47D06.}

\begin{abstract}
In this paper we provide sufficient conditions which ensure that the non-linear equation $dy(t)=Ay(t)dt+\sigma(y(t))dx(t)$, $t\in(0,T]$, with $y(0)=\psi$ and $A$ being an unbounded operator, admits a unique mild solution which is classical, i.e., $y(t)\in D(A)$ for any $t\in (0,T]$, and we compute the blow-up rate of the norm of $y(t)$ as $t\rightarrow 0^+$. We stress that the regularity of $y$ is independent on the smoothness of the initial datum $\psi$, which in general does not belong to $D(A)$. As a consequence we get an integral representation of the mild solution $y$ which allows us to prove a chain rule formula for smooth functions of $y$ and necessary conditions for the invariance of hyperplanes with respect to the non-linear evolution equation.
\end{abstract}

\maketitle

\section{Introduction}

The Young integral has been introduced in \cite{You36}, where the author defines extension of the Riemann-Stieltjes  integral $\int fdg$ when neither $f$ nor $g$ have finite total variations. In particular in \cite{You36} it is shown that,  if $f$ and $g$ are continuous functions such that $f$ has finite $p$-variation and $g$ has finite $q$-variation, with $p,q>0$ and $p^{-1}+q^{-1}>1$, then the Stieltjes integral $\int fdg$ is well-defined as a limit of  Riemann sums.
This was the starting point of the crucial extension to rough paths integration. Indeed, in \cite{Lyo98} the author proves that it is possible to define the integral $\int fdx$ also in the case when $f$ has finite $p$-variation and $x$ has finite $q$-variation with $p,q>0$ and $p^{-1}+q^{-1}<1$. In this case, additional information on the function $x$ is needed, which would play the role of iterated integrals for regular paths.

An alternative formulation of the integration over rough paths is provided in \cite{G04}, where the author considers H\"older-like (semi)norms instead of $p$-variation norms. Namely, if $ f$ is $\alpha$-H\"older continuous and $g$ is $\eta$-H\"older continuous with $\alpha+\beta>1$ then  the Young integral is well defined  as the unique solution to an algebraic problem. Recently, a more general theory of rough integration, when   $\alpha+\beta\leq 1$, has been introduced in \cite{FrHa}.

Here, we consider only Young integrals and focus on the spatial regularity of solutions to infinite dimensional evolution equations leaving aside the enormous amount of results connected to the rough paths case culminating in the   breakthrough on singular SPDEs (see, e.g., \cite{gubinelli-panorama}).
Namely, we consider the non-linear evolution equation
\begin{align}
\label{omo_equation-intro}
\left\{
\begin{array}{ll}
dy(t)=Ay(t)dt+\sigma(y(t))dx(t), &t\in(0,T],\\[1mm]
y(0)=\psi,
\end{array}
\right.
\end{align}
where $A$ is the infinitesimal generator of a semigroup with suitable regularizing properties and $x$ is a $\eta$-H\"older continuous function with $\eta>1/2$.
Ordinary differential equations (in finite dimensional spaces) driven by
an irregular path of H\"older regularity greater than 1/2 have been understood in full details since  \cite{Zah98} (see also \cite{Lej03}). On the other hand, the infinite dimensional case was treated in \cite{GLT06} and then developed in \cite{GT10} and \cite{DGT12}, see also \cite{MaNu03} for earlier results in the context of
stochastic partial differential equations driven by an infinite dimensional
fractional Brownian motion of Hurst parameter $H>1/2$.

In \cite{DGT12}, problem \eqref{omo_equation-intro} is formulated in a mild form
\begin{eqnarray*}
y(t)=S(t)y(0)+\int_0^t S(t-r)(\sigma(y(r)))dx(r), \qquad\;\, t\in[0,T],
\end{eqnarray*}
where $(S(t))_{t\ge 0}$ is the analytic semigroup generated by the sectorial operator $A$, and the authors exploit the regularizing properties  of $S$ to show that, if the initial datum $\psi$ is smooth enough (i.e., if it belongs to a suitable domain of the fractional powers $(-A)^{\alpha}$), then equation \eqref{omo_equation-intro} admits a unique mild solution with the same spatial regularity as the initial datum. The key technical point in \cite{DGT12} is to prove that the convolution
\begin{equation}\label{convolution-intro}
 \int_0^tS(t-s)f(s)dx(s)
\end{equation}
is well defined if $f$ takes values in $D((-A)^{\alpha})$ and belongs to a H\"older-type function space. To be more precise, the authors require that $f:[0,T]\rightarrow D((-A)^\alpha)$ satisfies the condition
\begin{align*}
\sup_{s<t, s,t\in[0,T]}\frac{|f(t)-f(s)-(S(t-s)-I)f(s)|_{D((-A)^\alpha)}}{(t-s)^{\beta}}<+\infty.
\end{align*}
This is one of the main difference with respect to the finite dimensional case, where the condition on the function $f$ reads in terms of classical H\"older norms.
Once that convolution \eqref{convolution-intro} is well-defined,  the smoothness of the initial datum $\psi$ and suitable estimates on \eqref{convolution-intro} allow the authors to solve the mild reformulation of equation \eqref{omo_equation-intro}  by a fixed point argument in the same H\"older-type function space introduced above.

Our point in the present paper is that if one looks a bit more closely to the trade-off between  H\"olderianity in time and regularity in space of the convolution \eqref{convolution-intro} one discovers that an extra regularity in space can be extracted by estimates, see Lemma \ref{lem:div_int}. This allows us to show that the mild solution to equation \eqref{omo_equation-intro}, which in our situation is driven by a finite dimensional noise, is more regular than the initial datum (that nevertheless has to enjoy the same regularity assumptions as in \cite{DGT12}). Namely, it is indeed a classical solution, i.e., $y(t)\in D(A)$ for any $t\in(0,T]$ (see Theorem \ref{es_mild_sol_gub}), and, as a byproduct, we prove that $y$ is also an integral solution to \eqref{omo_equation-intro}, i.e., it satisfies the equation
\begin{align*}
    y(t)=\psi+\int_0^t Ay(s)ds+\int_0^t \sigma(y(s))dx(s),\qquad\;\,t\in [0,T].
\end{align*}

It is worth mentioning that, when $A$ is an unbounded operator,  mild formulation of equation \eqref{omo_equation-intro} is the most suitable to prove existence and uniqueness of a solution since it allows to apply a fixed point argument in spaces of functions with a low degree of smoothness. On the other hand, this formulation is too weak in several  applications (for instance  Section 6 here), where an integral formulation of the solution helps a lot. Moreover, since our solution is classical, we can also obtain a chain rule; in other words we show that we can differentiate with respect to time regular enough functions of the solution to equation \eqref{omo_equation-intro}. Finally, the chain rule turns out to be the right tool to obtain necessary conditions for invariance of half spaces with respect to \eqref{omo_equation-intro} (see \cite{CouMar16} for general invariance results in the case of a finite dimensional equation driven by a rough path and \cite{CaDa} for the state of the art in the case of classical evolution equations in infinite dimensional spaces).

Summarizing, this paper can been described as a first step towards a systematic study, by the classical tools of semigroup theory, of smoothing properties of the mild solution to  \eqref{omo_equation-intro} (as they were introduced in \cite{DGT12}). We plan to go further in the analysis, first weakening the smoothness assumptions on $\psi$ and, then, developing results analogous to those in this paper for equations driven by more irregular noises as in the case of rough paths.

The paper is structured as follows.
In Section \ref{sect-2}, we introduce the function spaces that we use in this paper and we recall some results taken from \cite{G04,GT10}, slightly generalizing some of those results.
In Section \ref{sect-3}, we prove the existence and uniqueness of a mild solution to the non-linear Young equation \eqref{omo_equation-intro} when $\psi\in X_{\alpha}$ and $x$ is $\eta$-H\"older continuous for some $\eta\in(1/2,1)$ and $\alpha+\eta>1$. We show that this solution is classical and estimate the blow-up rate of its $X_{1+\mu}$-norm as $t$ tends to $0^+$, when $\mu\in [0,\eta+\alpha-1)$. Based on this result, in Section \ref{sect-4} we
prove that the mild solution to \eqref{omo_equation-intro} can be written in an integral form, which is used in Section \ref{sect-5} to prove the chain rule.  As a byproduct of the previous results, in Section \ref{sect-6} we prove a necessary condition for the invariance of the halfplane. Finally, in Section \ref{sect-examples} we provide an example to illustrate our results.

\medskip

\paragraph{\bf Notation.} 
We denote by $[a,b]^2_<$ the set $\{(s,t)\in\R^2:a\leq s<t\leq b\}$.
Further, for every $h\in X_\alpha$ we set $|h|_\alpha:=|h|_{X_\alpha}$ and we denote by $\mathscr L(X_\alpha,X_\gamma)$ the space of linear bounded operators from $X_\alpha$ into $X_\gamma$, for each $\alpha,\gamma\in\R$. For every operator $Q\in \mathscr L(X_\gamma;X_\alpha)$, we denote by $|Q|_{\gamma,\alpha}$ its operator norm.
For every $A\subset\R$,  $C(A;X)$ denotes the usual space of continuous functions from $A$ into $X$ endowed with the sup-norm. The subscript ``$b$'' stands for bounded. When $X=X_{\alpha}$ we denote the sup-norm by $\|\cdot\|_{\alpha,A}$.
If $f\in C^{\alpha}([a,b])$ for some $\alpha\in (0,1)$, then we denote by $[f]_{\alpha}$ the usual $\alpha$-H\"older seminorm of $f$.

\section{The abstract Young equation}
\label{sect-2}

\subsection{Function spaces and preliminary results}
Throughout the paper, $X$ denotes a Banach space and $A:D(A)\subseteq X\rightarrow X$ is a linear operator which generates a semigroup $(S(t))_{t\geq 0}$. We further assume the following set of assumptions.

\begin{hypothesis}
\label{hyp-main}
\begin{enumerate}[\rm (i)]
\item
For every $\alpha\in [0,2)$, there exists a space $X_\alpha$ $($with the convention that $X_0=X$ and $X_1=D(A))$ such that if $\beta\leq \alpha$ then $X_\alpha$ is continuously embedded into $X_\beta$. We denote by $K_{\alpha,\beta}$ a positive constant such that $|x|_{\beta}\le K_{\alpha,\beta}|x|_{\alpha}$ for every $x\in X_{\alpha}$;
\item
for every $\zeta,\alpha,\gamma\in [0,2)$, $\zeta\leq\alpha$, and $\mu,\nu\in(0,1]$ with $\mu>\nu$ there exist positive constants $M_{\zeta,\alpha,T}$, and $C_{\mu,\nu,T}$, which depend on $T$,such that
\begin{align}
\left\{
\begin{array}{ll}
(a)\ \|S(t)\|_{\mathscr{L}(X_\zeta, X_\alpha)}\leq M_{\zeta,\alpha,T} t^{-\alpha+\zeta},\\[1mm]
(b) \ \|S(t)-I\|_{\mathscr{L}(X_\mu,X_\nu)}\leq C_{\mu,\nu,T} t^{\mu-\nu},
\end{array}
\right.
\label{stime_smgr}
\end{align}
for every $t\in (0,T]$.
\end{enumerate}
\end{hypothesis}

\begin{example}
{\rm If $A$ is a sectorial operator on $X$, with resolvent set which contains a sector centered at the origin, then Hypothesis \ref{hyp-main} are satisfied if we set $X_\alpha:=D_A(\alpha,\infty)$ for every $\alpha\in (0,2)$}.
\end{example}

We now introduce some operators which will be used extensively in this paper.
\begin{definition}
Let $a$ and $b$ be two real numbers with $a<b$. Then,
the operators $\delta_1,\hat\delta_1:C([a,b];X)\rightarrow C([a,b]^2_<;X)$ are defined as follows:
\begin{align*}
& (\delta_1f)(s,t)= f(t)-f(s),\\[1mm]
&  (\hat\delta_1 f)(s,t)= (\delta_1f)(s,t)-{\mathfrak a}(s,t)f(s),
\end{align*}
for every $(s,t)\in [a,b]^2_<$ and $f\in C([a,b];X)$, where $\mathfrak{a}(s,t)=S(t-s)-I$.
\end{definition}

\begin{remark}
{\rm We stress that the continuity of the function $\mathfrak{a}$ in $[a,b]^2_{<}$ is implied by the strong continuity of the semigroup $(S(t))_{t\geq0}$ in
$(0,+\infty)$. No continuity assumptions at $t=0$ is required.}
\end{remark}

\subsection{Function spaces}

\begin{definition}
For every $a,b\in\R$, with $a<b$ and $\alpha,\beta\in [0,2)$, we denote by:
\begin{enumerate}[\rm (i)]
\item
$C_{\beta}([a,b]_<^2;X_\alpha)$ the subspace of $C([a,b]^2_<;X_{\alpha})$ consisting of functions $f$ such that
\begin{align*}
\|f\|_{\beta|\alpha,[a,b]}:=\sup_{(s,t)\in [a,b]^2_{<}}\frac{|f(s,t)|_{\alpha}}{|t-s|^\beta}<+\infty;
\end{align*}
when there is no risk of confusion, we simply write $\|f\|_{\beta|\alpha}$ instead of $\|f\|_{\beta|\alpha,[a,b]}$. Notice that, if $g\in C^{\beta}([a,b];X_{\alpha})$, then $\|\delta_1g\|_{\beta|\alpha}$ is the standard $\beta$-H\"older seminorm of $g$;
\item
$\hat C_{\beta}([a,b];X_\alpha)$ the subset of $C([a,b];X_\alpha)$ consisting of functions $f$ such that $\hat\delta_1 f\in C_{\beta}([a,b]_<^2;X_\alpha)$ endowed with the norm
\begin{align*}
\|f\|_{\beta,\alpha,[a,b]}:=\|f\|_{\alpha,[a,b]}+\|\hat\delta_1 f\|_{\beta|\alpha,[a,b]}.
\end{align*}
\end{enumerate}
\end{definition}

\begin{remark}
{\rm For every $a,b\geq0$ with $a<b$, and $\alpha, \beta, k\in [0,2)$ the following properties hold true.
\begin{enumerate}[\rm (i)]
\item
If $f\in C([a,b];X_{\alpha})\cap \hat C_k([a,b];X_\beta)$ then $f\in C^\rho([a,b];X_\gamma)$ for every $\gamma\in [0,\beta]$, such that $\gamma<\alpha$, and $\rho:=\min\{k,\alpha-\gamma\}$. Indeed, for every $(s,t)\in[a,b]_<^2$ we can estimate
\begin{align*}
|f(t)-f(s)|_\gamma
\leq |(\hat\delta_1f)(s,t)|_\gamma+|\mathfrak a(s,t)f(s)|_\gamma.
\end{align*}
Estimating separately the two terms we get
\begin{align*}
&|(\hat\delta_1f)(s,t)|_\gamma\leq K_{\beta,\gamma}\|f\|_{k|\beta,[a,b]}|t-s|^k,\\
&|\mathfrak a(s,t)f(s)|_\gamma\leq C_{\alpha,\gamma,b}\|f\|_{\alpha,[a,b]}|t-s|^{\alpha-\gamma}
\end{align*}
for every $a\le s<t\le b$, which yields the assertion. In particular, $\hat C_{\alpha}([a,b];X_{\alpha})$ is continuously embedded into $C^{\alpha-\gamma}([a,b];X_{\gamma})$ if $\alpha\in (0,1)$ and $\gamma\in [0,\alpha]$, it is contained in the space of Lipschitz continuous functions over $[a,b]$ with values in $X$, if $\alpha=1$, and it consists of constant functions if $\alpha>1$.
\item
For every $f:[a,b]_<^2\rightarrow X$ and $\alpha,\beta,\gamma\geq0 $, such that $\beta>\gamma$, it holds that
\begin{align*}
\|f\|_{\gamma|\alpha,[a,b]}\leq |b-a|^{\beta-\gamma}\|f\|_{\beta|\alpha,[a,b]}.
\end{align*}
Indeed, if $\|f\|_{\gamma|\alpha,[a,b]}=+\infty$ or $\|f\|_{\beta|\alpha,[a,b]}=+\infty$ then the statement is trivial. Let us assume that both $\|f\|_{\gamma|\alpha,[a,b]}$ and $\|f\|_{\beta|\alpha,[a,b]}$ are finite. Then,
\begin{align*}
\|f\|_{\gamma|\alpha,[a,b]}
= & \sup_{(s,t)\in[a,b]_<^2}\frac{|f(s,t)|_\alpha}{|t-s|^{\gamma}}
= \sup_{(s,t)\in[a,b]_<^2}\frac{|f(s,t)|_\alpha}{|t-s|^{\beta}}|t-s|^{\beta-\gamma} \\
\leq & |b-a|^{\beta-\gamma} \|f\|_{\beta|\alpha,[a,b]}.
\end{align*}
\end{enumerate}}
\end{remark}

We recall some relevant results from \cite{G04} and \cite{GT10}. In particular, we recall the definition of the Young integrals
\begin{align*}
\int_s^tf(r)dx(r), \qquad\;\, \int_s^tS(t-r)f(r)dx(r), \quad s,t\in[a,b],
\end{align*}
where $f:[a,b]\rightarrow X$ and $x:[a,b]\rightarrow \R$ satisfy suitable assumptions.
In particular, we assume the following condition on $x$.

\begin{hypothesis}
\label{ip:young_path}
$x\in C^\eta([a,b])$ for some $\eta\in (1/2,1)$.
\end{hypothesis}

\begin{theorem}[Section $3$ in \cite{G04} and Section $2$ in \cite{GT10}]
\label{thm:young_int}
Fix $f\in C^\alpha([a,b];X)$, where $\alpha\in (1-\eta,1)$. Then, for each $(s,t)\in [a,b]_<^2$ the Riemann series
\begin{align}
\sum_{i=0}^{n-1}f(t_i)(x(t_{i+1})-x(t_i)),
\label{somme_di_riemann-0}
\end{align}
where $\Pi(s,t):=\{t_0=s<t_1<\ldots<t_n=t\}$ is a partition of $[s,t]$ and $|\Pi(s,t)|:=\max\{t_{i+1}-t_i: i:=0,\ldots,n-1\}$, converges in $X$ as $|\Pi(s,t)|$ tends to $0$. Further, there exists a function ${\mathscr R}_f:[a,b]^2_<\rightarrow X$ such that
\begin{align}
\label{spezzamento_young_int}
{\mathscr I}_f(s,t):=\lim_{|\Pi(s,t)|\rightarrow 0}\sum_{i=0}^{n-1}f(t_i)(x(t_{i+1})-x(t_i))=f(s)(x(t)-x(s))+{\mathscr R}_f(s,t)
\end{align}
for each $(s,t)\in[a,b]_<^2$, and
\begin{align}
\label{stima_resto_Rf}
\|{\mathscr R}_f\|_{\eta+\alpha|0,[a,b]}\leq \frac{1}{1-2^{-(\eta+\alpha-1)}}\|\delta_1f\|_{\alpha|0,[a,b]}\|x\|_{C^{\eta}([a,b])}.
\end{align}
In particular,
\begin{align}
\label{stima_int_young_nosemi}
\|{\mathscr I}_f\|_{\eta|0,[a,b]}\leq \bigg (\|f\|_0+\frac{{(b-a)^{\alpha}}}{1-2^{-(\eta+\alpha-1)}}\|\delta_1f\|_{\alpha|0,[a,b]}\bigg )\|x\|_{C^{\eta}([a,b])}.
\end{align}
\end{theorem}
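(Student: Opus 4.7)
The plan is to follow the classical Gubinelli-type sewing/bisection argument. For fixed $(s,t)\in[a,b]^2_<$ and $n\ge 0$, I would set $s^n_i:=s+i(t-s)2^{-n}$ and define the dyadic Riemann sum
\begin{align*}
J^{(n)}(s,t):=\sum_{i=0}^{2^n-1}f(s^n_i)\bigl(x(s^n_{i+1})-x(s^n_i)\bigr),
\end{align*}
so that $J^{(0)}(s,t)=f(s)(x(t)-x(s))$. Writing $J^{(n+1)}-J^{(n)}$ as a telescoping sum over the midpoints $s^{n+1}_{2i+1}$, each summand equals $\bigl(f(s^{n+1}_{2i+1})-f(s^{n+1}_{2i})\bigr)\bigl(x(s^{n+1}_{2i+2})-x(s^{n+1}_{2i+1})\bigr)$, whose norm in $X$ is bounded by $\|\delta_1f\|_{\alpha|0,[a,b]}\,[x]_\eta\bigl((t-s)2^{-(n+1)}\bigr)^{\alpha+\eta}$. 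Summing over the $2^n$ midpoints and factoring gives the sewing estimate
\begin{align*}
|J^{(n+1)}(s,t)-J^{(n)}(s,t)|_0\le 2^{-(\alpha+\eta)}\|\delta_1f\|_{\alpha|0,[a,b]}\,[x]_\eta (t-s)^{\alpha+\eta}\cdot 2^{-n(\alpha+\eta-1)},
\end{align*}
and since $\alpha+\eta>1$ the geometric series $\sum_{n\ge 0}2^{-n(\alpha+\eta-1)}=(1-2^{-(\alpha+\eta-1)})^{-1}$ converges.

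By completeness of $X$, the sequence $(J^{(n)}(s,t))_n$ is Cauchy and converges to some $J^{(\infty)}(s,t)\in X$. Setting $\mathscr{R}_f(s,t):=J^{(\infty)}(s,t)-f(s)(x(t)-x(s))$, the telescoping bound together with $[x]_\eta\le\|x\|_{C^\eta([a,b])}$ yields \eqref{stima_resto_Rf}. To identify $J^{(\infty)}(s,t)$ with the limit of the Riemann sums \eqref{somme_di_riemann-0} along \emph{any} vanishing-mesh sequence of partitions, I would invoke Young's point-removal trick: in any partition of $[s,t]$ with $N+1$ points there is an interior index $i$ with $t_{i+1}-t_{i-1}\le 2(t-s)/N$, and deleting $t_i$ alters $J_\Pi$ by $\bigl(f(t_i)-f(t_{i-1})\bigr)\bigl(x(t_{i+1})-x(t_i)\bigr)$, of norm at most $\|\delta_1f\|_{\alpha|0,[a,b]}\,[x]_\eta (2(t-s)/N)^{\alpha+\eta}$. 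Iterating the removal down to $\{s,t\}$ and summing the tail yields a uniform-in-$\Pi$ bound on $|J_\Pi(s,t)-f(s)(x(t)-x(s))|_0$; applying the same strategy to a common refinement of two arbitrary partitions shows that $J_\Pi(s,t)$ forms a Cauchy net as $|\Pi(s,t)|\to 0$, converging to $J^{(\infty)}(s,t)=:\mathscr{I}_f(s,t)$.

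Finally, the supremum bound \eqref{stima_int_young_nosemi} is an immediate consequence of the decomposition \eqref{spezzamento_young_int}: we split $|\mathscr{I}_f(s,t)|_0\le\|f\|_0\cdot|x(t)-x(s)|+|\mathscr{R}_f(s,t)|_0$, use $|x(t)-x(s)|\le\|x\|_{C^\eta([a,b])}(t-s)^\eta$, and bound the remainder via \eqref{stima_resto_Rf} after writing $(t-s)^{\alpha+\eta}\le(b-a)^\alpha(t-s)^\eta$; dividing by $(t-s)^\eta$ and taking the supremum gives the claim. The main technical difficulty is the extension from dyadic to arbitrary partitions described above: the sewing telescope immediately produces convergence along $2^n$-bisections for free, but upgrading to convergence of \emph{every} Riemann sum as $|\Pi(s,t)|\to 0$ requires the uniform-in-$\Pi$ estimate coming from the point-removal argument, and it is precisely there that the hypothesis $\alpha+\eta>1$ is essential, via summability of the removal errors.
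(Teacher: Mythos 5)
Your argument is correct, and it is worth noting that the paper itself gives no proof of this statement: it is recalled verbatim from Gubinelli \cite{G04} and Gubinelli--Tindel \cite{GT10}, where it is obtained as an instance of the abstract sewing-map construction (the germ $A(s,t)=f(s)(x(t)-x(s))$ has $\delta A$ of order $|t-s|^{\alpha+\eta}$ with $\alpha+\eta>1$, and the sewing lemma produces $\mathscr I_f$ and $\mathscr R_f$ with exactly the constant $(1-2^{-(\alpha+\eta-1)})^{-1}$). Your hands-on version — dyadic telescoping for existence of the limit and the bound on $\mathscr R_f$, then Young's point-removal trick to upgrade from dyadic to arbitrary partitions — is the classical proof underlying that sewing lemma, so the two routes are essentially the same mechanism, yours being more elementary and self-contained, the cited one packaging the dyadic step into a reusable abstract lemma. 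Two small points to tighten: a partition with $N+1$ points has $N-1$ interior points, so the pigeonhole gap is $2(t-s)/(N-1)$ rather than $2(t-s)/N$ (immaterial, since $\alpha+\eta>1$ still makes the removal errors summable); and for the Cauchy-net step it is cleanest to compare a partition $\Pi$ with a refinement subinterval by subinterval, i.e.\ apply your uniform removal bound on each $[u,v]\in\Pi$ to get an error of order $\sum_{[u,v]\in\Pi}(v-u)^{\alpha+\eta}\le (t-s)\,|\Pi|^{\alpha+\eta-1}$, which vanishes with the mesh — the uniform bound over all of $[s,t]$ alone would not suffice. With that reading, your constant from the dyadic telescope is in fact slightly better than the stated one, so \eqref{stima_resto_Rf} and then \eqref{stima_int_young_nosemi} follow exactly as you say.
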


\begin{remark}
\label{rem-davide}
{\rm For each $s, \tau,t\in [a,b]$, with $s<\tau<t$, it holds that
\begin{align}
\label{spezzamento_int_noconv}
\mathscr I_f(s,t)=\mathscr I_f(s,\tau)+\mathscr I_f(\tau,t).
\end{align}
To check this formula it suffices to choose a family of partitions $\Pi(s,t)$ such that $\tau\in \Pi(s,t)$ and letting $|\Pi(s,t)|$ tend to $0$. As a byproduct, if we set $\Phi(t):=\mathscr I_f(a,t)$, $t\in(a,b]$, we deduce that $(\delta_1\Phi)(s,t)=\mathscr I_f(s,t)$. Indeed, from \eqref{spezzamento_int_noconv} we infer
\begin{align*}
(\delta_1\Phi)(s,t)=\mathscr I_f(a,t)-\mathscr I_f(a,s)=\mathscr I_f(s,t).
\end{align*}
Moreover, $\Phi$ is the unique function such that $\Phi(a)=0$ and
\begin{eqnarray*}
|(\delta_1\Phi)(t,s)-f(s)(\delta_1x)(t,s)|\le c|t-s|^{\alpha+\eta}
\end{eqnarray*}
for every $(s,t)\in [a,b]_{<}^2$ and some positive constant $c$.}
\end{remark}

The above result reports the construction of the ``classical'' Young integral. The following one, proved in \cite[Sections 3 \& 4]{GT10}, accounts the construction of Young type convolutions with the semigroup $(S(t))_{t\ge 0}$.

\begin{theorem}
\label{teo-2.4}
For each $f\in \hat C_k([a,b];X_\beta)$, such that $\beta\in [0,2)$ and $\eta+k>1$, the limit
\begin{align}
\label{somme_di_riemann}
\lim_{|\Pi(s,t)|\to 0}\sum_{i=0}^{n-1} S(t-t_{i})f(t_i)(x(t_{i+1})-x({t_i}))
\end{align}
exists in $X$ for every $(s,t)\in [a,b]^2_{<}$.
Further, there exists a function ${\mathscr R}_{Sf}:[a,b]_<^2\rightarrow X$ such that
\begin{align*}
{\mathscr I}_{Sf}(s,t):= & \lim_{|\Pi(s,t)|\rightarrow 0}\sum_{i=0}^{n-1} S(t-t_{i})f(t_i)(x(t_{i+1})-x({t_i})) \\
= & S(t-s)f(s)(x(t)-x(s))+{\mathscr R}_{Sf}(s,t),
\end{align*}
for each $(s,t)\in[a,b]_<^2$, and for each $\varepsilon\in[0,1)$ there exists a positive constant $c=c(\eta+\alpha,\varepsilon)$ such that
\begin{align}
\label{stima_sewing_map_conv}
\|{\mathscr R}_{Sf}\|_{\eta+k-\varepsilon|\beta+\varepsilon,[a,b]}
\leq c\|\hat\delta_1f\|_{k|\beta,[a,b]}\|x\|_{C^{\eta}([a,b])}.
\end{align}
In particular,
\begin{align*}
\|{\mathscr I}_{Sf}\|_{\eta|\beta,[a,b]}
\leq &M_{0,\beta,b}\|f\|_\beta\|x\|_\eta+\|{\mathscr R}_{Sf}\|_{\eta|\beta,[a,b]}\\
\leq &\big (M_{0,\beta,b}\|f\|_\beta+{c(k,a,b)}\|\hat\delta_1f\|_{k|\beta,[a,b]}\big )\|x\|_{C^{\eta}([a,b])}.
\end{align*}
\end{theorem}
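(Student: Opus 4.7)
My plan is to construct $\mathscr I_{Sf}$ as the limit of Riemann sums by applying a sewing-type argument to the germ
$$\mu(s,t) := S(t-s) f(s)(x(t) - x(s)), \qquad (s,t)\in [a,b]_<^2.$$

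\textbf{Step 1 (cocycle defect).}  For $a \leq s < u < t \leq b$, using the semigroup identity $S(t-u)S(u-s)=S(t-s)$ together with the identity $(\hat\delta_1 f)(s,u)=f(u)-S(u-s)f(s)$, a direct algebraic manipulation yields
$$\mu(s,t) - S(t-u)\mu(s,u) - \mu(u,t) = -S(t-u)(\hat\delta_1 f)(s,u)(x(t)-x(u)).$$

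\textbf{Step 2 (defect estimate).}  Taking the $X_{\beta+\varepsilon}$-norm and applying Hypothesis \ref{hyp-main}(ii)(a) with $\zeta=\beta$ and $\alpha=\beta+\varepsilon$, combined with $f\in\hat C_k([a,b];X_\beta)$ and $x\in C^\eta([a,b])$, gives
$$|\mu(s,t)-S(t-u)\mu(s,u)-\mu(u,t)|_{\beta+\varepsilon}\leq M_{\beta,\beta+\varepsilon,b}\,\|\hat\delta_1 f\|_{k|\beta,[a,b]}\,[x]_\eta\,(u-s)^k(t-u)^{\eta-\varepsilon}.$$

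\textbf{Step 3 (sewing).}  The hypothesis $k+\eta>1$ ensures that for $\varepsilon$ such that $k+\eta-\varepsilon>1$, choosing $u$ close to the midpoint of $(s,t)$ makes the product $(u-s)^k(t-u)^{\eta-\varepsilon}$ controlled by a constant times $(t-s)^{k+\eta-\varepsilon}$.  A Young-type dyadic point-removal argument, in the semigroup-adapted form of \cite[Sec.~3--4]{GT10} (remove at each stage the interior point $t_i$ minimizing $t_{i+1}-t_{i-1}$), then shows that the Riemann sums
$$J_\Pi(s,t):=\sum_{i=0}^{n-1}S(t-t_i)f(t_i)(x(t_{i+1})-x(t_i))$$
form a Cauchy family in $X_{\beta+\varepsilon}$ (and a fortiori in $X$) as $|\Pi|\to 0$, and that their limit $\mathscr I_{Sf}(s,t)$ does not depend on the approximating partitions.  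The accumulated geometric series is of order $(t-s)^{k+\eta-\varepsilon}$, which yields the decomposition $\mathscr I_{Sf}(s,t)=\mu(s,t)+\mathscr R_{Sf}(s,t)$ together with the sharp bound \eqref{stima_sewing_map_conv}.

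\textbf{Step 4 (final Hölder bound).}  For the estimate on $\|\mathscr I_{Sf}\|_{\eta|\beta,[a,b]}$, use the decomposition $\mathscr I_{Sf}=\mu+\mathscr R_{Sf}$: the first summand is bounded via Hypothesis \ref{hyp-main}(ii)(a) (with $\zeta=\alpha=\beta$) and the $\eta$-Hölder continuity of $x$; the second is bounded via \eqref{stima_sewing_map_conv} with $\varepsilon=0$, lowering the time exponent from $\eta+k$ to $\eta$ by Remark (ii) at the cost of a factor $(b-a)^k$ absorbed into $c(k,a,b)$.

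\textbf{Expected main obstacle.}  The delicate point is the sewing argument of Step 3: the defect bound couples the two different exponents $(u-s)^k$ and $(t-u)^{\eta-\varepsilon}$, so Young's dyadic removal procedure must be run carefully to guarantee that the accumulated error still sums to a convergent geometric series controlled by $(t-s)^{k+\eta-\varepsilon}$.  It is precisely this mechanism that realises the trade-off ``$\varepsilon$ units of spatial regularity for $\varepsilon$ units of temporal regularity'' and forces us to track the limit in $X_{\beta+\varepsilon}$ rather than only in $X$.
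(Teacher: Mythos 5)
Your proposal is correct and follows essentially the same route as the proof the paper relies on (the semigroup-adapted sewing argument of Gubinelli--Tindel \cite{GT10}, which the paper invokes by reference rather than reproducing): the germ $S(t-s)f(s)(x(t)-x(s))$, the twisted cocycle defect $-S(t-u)(\hat\delta_1 f)(s,u)(x(t)-x(u))$, the smoothing estimate trading $\varepsilon$ of space regularity for $\varepsilon$ of time regularity, and the Young point-removal/sewing step are exactly the ingredients there. Your Steps 1, 2 and 4 are verified correctly, and the obstacle you flag in Step 3 is precisely what the quoted sewing construction of \cite{GT10} settles.
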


\begin{remark}
{\rm Actually, in \cite{GT10}, Theorem \ref{teo-2.4} has been proved assuming that $X_{\beta}=D((-A)^{\beta})$.
A direct inspection of the proof of \cite[Theorem 4.1(2)]{GT10} shows that the assertion holds true also under our assumptions, since
estimates \eqref{stime_smgr} allow us to repeat verbatim the same arguments in the quoted paper.}
\end{remark}

\begin{remark}
{\rm Clearly, when $x\in C^1([a,b])$
\begin{itemize}
\item
the limit in \eqref{somme_di_riemann-0} coincides with the Riemann-Stieltjes integral over the interval $[s,t]$ of the function $f$ with respect to the function $x$;
\item
the limit in \eqref{somme_di_riemann} coincides with the Riemann-Stieltjes integral of the function $S(t-\cdot)f$ with respect to the function $x$ over the interval $[s,t]$
\end{itemize}
for every $(s,t)\in [a,b]^2_{<}$.}
\end{remark}

The previous remark yields the following definition (see \cite{GT10}).

\begin{definition}
For every  $f\in C^{\alpha}([0,T])$ $(\alpha\in (1-\eta,1))$, ${\mathscr I}_f(s,t)$ is the Young integral of $f$ in $[s,t]$ for every $(s,t)\in [a,b]^2_{<}$ and it is denoted by
\begin{eqnarray*}
\int_s^tf(u)dx(u).
\end{eqnarray*}
Similarly, for every $f\in \hat C_k([a,b];X_\beta)$, with $k\in (1-\eta,1)$ and $\beta\in [0,2)$, ${\mathscr I}_{Sf}(s,t)$ is the Young integral of the function $S(t-\cdot)f$ with respect to $x$ in $[s,t]$ for every $(s,t)\in [a,b]^2_{<}$ and it is denoted by
 \begin{align}
\int_s^tS(t-u)f(u)dx(u).
\label{young_integral_def}
\end{align}
\end{definition}

For further use, we prove a slight extension of the estimate in \cite[Theorem 4.1(2)]{GT10}.

\begin{lemma}
\label{thm:young_integral}
Let $f$ be a function in $\hat C_k([a,b];X_{\beta})\cap C([a,b];X_{\beta_1})$ and assume that $k\in (1-\eta,1)$ and $\beta,\beta_1\in [0,2)$. Then, for every $r\in[k,1)$ the function ${\mathscr I}_{Sf}$ belongs to $C_{\eta+k-r}([a,b]_<^2;X_{\nu_r})$, where $\nu_r:=\min\{r+\beta,r+\beta_1-k\}$. Further,
\begin{align}
\label{integrale_stima_migliore}
\|{\mathscr I}_{Sf}\|_{\eta+k-r|\nu_r,[a,b]}
\le C_{\beta_1,\eta,r,k}\|x\|_{C^{\eta}([a,b])}(\|\hat\delta_1f\|_{k|\beta,[a,b]}+\|f\|_{\beta_1,[a,b]})
\end{align}
for every $r\in[k,1)$.
\end{lemma}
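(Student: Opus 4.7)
The plan is to establish \eqref{integrale_stima_migliore} by exploiting the decomposition supplied by Theorem \ref{teo-2.4}, namely
\begin{align*}
{\mathscr I}_{Sf}(s,t) = S(t-s)f(s)(x(t)-x(s)) + {\mathscr R}_{Sf}(s,t),
\end{align*}
for every $(s,t)\in[a,b]_<^2$, and then bounding the two summands separately in the $X_{\nu_r}$-norm. The novelty compared with the estimate already stated in Theorem \ref{teo-2.4} is that the extra spatial information $f\in C([a,b];X_{\beta_1})$ has to be activated in the handling of the leading term $S(t-s)f(s)(x(t)-x(s))$, while the remainder is controlled by trading time H\"olderianity for spatial regularity in the standard way. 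The two estimates must be calibrated so that the combined exponent in $t-s$ is exactly $\eta+k-r$ and the target norm is exactly $X_{\nu_r}$.

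For the remainder, I would invoke \eqref{stima_sewing_map_conv} with the choice $\varepsilon=r\in[k,1)$ to obtain
\begin{align*}
\|{\mathscr R}_{Sf}\|_{\eta+k-r|\beta+r,[a,b]}\leq c(\eta+k,r)\|\hat\delta_1f\|_{k|\beta,[a,b]}\|x\|_{C^\eta([a,b])},
\end{align*}
and then use the continuous embedding $X_{\beta+r}\hookrightarrow X_{\nu_r}$, guaranteed by Hypothesis \ref{hyp-main}(i) since $\nu_r\leq r+\beta$, to pass to the weaker norm $|\cdot|_{\nu_r}$ at the price of multiplying by the embedding constant $K_{\beta+r,\nu_r}$. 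This yields a bound of the required form $(t-s)^{\eta+k-r}$ with the $\|\hat\delta_1 f\|_{k|\beta,[a,b]}$ contribution.

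For the leading term I would separate two subcases according to the position of $\beta_1$ relative to $\nu_r$. If $\nu_r\geq\beta_1$ (which, thanks to $\nu_r\leq r+\beta_1-k$ and $r\geq k$, automatically forces $\beta_1-\nu_r\geq k-r$), I would apply \eqref{stime_smgr}(a) with $\zeta=\beta_1$ and $\alpha=\nu_r$ to obtain $|S(t-s)f(s)|_{\nu_r}\leq M_{\beta_1,\nu_r,b}(t-s)^{\beta_1-\nu_r}|f(s)|_{\beta_1}$, and combine it with $|x(t)-x(s)|\leq(t-s)^\eta[x]_\eta$; the resulting exponent $\eta+\beta_1-\nu_r$ is $\geq \eta+k-r$, and the surplus is absorbed into a factor $(b-a)^{(\beta_1-\nu_r)-(k-r)}$. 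If on the contrary $\nu_r<\beta_1$, I would use the embedding $X_{\beta_1}\hookrightarrow X_{\nu_r}$, the uniform bound $\|S(t-s)\|_{\mathscr L(X_{\beta_1})}\leq M_{\beta_1,\beta_1,b}$ from \eqref{stime_smgr}(a) with $\zeta=\alpha=\beta_1$, and the H\"older bound on $x$, producing a factor $(t-s)^\eta\leq (b-a)^{r-k}(t-s)^{\eta+k-r}$ since $r\geq k$. In both scenarios the leading term is controlled by $\|f\|_{\beta_1,[a,b]}\|x\|_{C^\eta([a,b])}(t-s)^{\eta+k-r}$ up to a constant depending only on $\beta_1,\eta,r,k$ and $b-a$.

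Summing the two estimates gives \eqref{integrale_stima_migliore}. I expect the main obstacle to be bookkeeping rather than deep: verifying that the constraints $\beta_1\leq \nu_r$ or $\nu_r\leq r+\beta$ are compatible with the definition $\nu_r=\min\{r+\beta,r+\beta_1-k\}$ in every configuration of the parameters, and ensuring that the exponents of $(t-s)$ and $(b-a)$ are non-negative so that restricting to a bounded interval does not produce blow-ups. The hypothesis $r\in[k,1)$ is precisely what makes all these inequalities go through.
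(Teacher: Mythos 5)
Your proposal is correct and follows essentially the same route as the paper: decompose ${\mathscr I}_{Sf}$ via Theorem \ref{teo-2.4}, control the remainder with \eqref{stima_sewing_map_conv} (taking $\varepsilon=r$), and control the leading term $S(t-s)f(s)(x(t)-x(s))$ by combining the H\"older bound on $x$ with the smoothing estimate \eqref{stime_smgr}(a) applied to $f(s)\in X_{\beta_1}$. The only cosmetic difference is that the paper gains exactly $\gamma=r-k$ derivatives on the leading term, landing in $X_{r+\beta_1-k}$ with exponent $\eta+k-r$ and then passing to $X_{\nu_r}$ by embedding, whereas you target $X_{\nu_r}$ directly with a case split; the bookkeeping is equivalent.
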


\begin{proof}
From Theorem \ref{teo-2.4} it follows that $\mathscr If$ is well-defined as Young integral and
\begin{align*}
({\mathscr I}_{Sf})(s,t)
=(x(t)-x(s))S(t-s)f(s)+{\mathscr R}_{Sf}(s,t), \qquad\;\, (s,t)\in[a,b]_<^2.
\end{align*}
Using condition \eqref{stime_smgr}(a), we get
\begin{align}
|(x(t)-x(s))S(t-s)f(s)|_{\gamma+\beta_1}
\leq & [x]_{C^{\eta}([a,b])}|t-s|^\eta|S(t-s)f|_{\gamma+\beta_1}\notag\\
\leq &M_{\beta_1,\gamma+\beta_1,b}[x]_{C^{\eta}([a,b])}\|f\|_{\beta_1,[a,b]}|t-s|^{\eta-\gamma}
\label{stima_incrementata_X1}
\end{align}
for each $(s,t)\in [a,b]^2_{<}$, $\gamma\in [0,\eta)$.

Now, we fix $r\in[ k,1)$ and take $\gamma=r-k$. Since $\eta+k>1$ it follows that $\gamma<1-k<\eta$ and $\eta-\gamma=\eta+k-r$. From \eqref{stima_sewing_map_conv} and \eqref{stima_incrementata_X1} we conclude that ${\mathscr I}_{Sf}\in C_{\eta+k-r}([a,b]_<^2;X_{\nu_r})$, where $\nu_r:=\min\{r+\beta, r+\beta_1-k\}$, and estimate \eqref{integrale_stima_migliore} follows.
\end{proof}

\begin{remark}
{\rm From the definition of the Young integral it follows that if $x,x_1,x_2\in C^\eta([a,b])$ and $f,f_1,f_2\in \hat C_k([a,b];X_\beta)$, for some $\eta\in (1/2,1)$, $k\in (1-\eta,1)$ and $\beta\in [0,2)$, then
\begin{align}
\label{young_integral_linearity}
\int_s^tS(t-u)f(u)d(x_1+x_2)(u)=\int_s^tS(t-u)f(u)dx_1(u)+\int_s^tS(t-u)f(u)dx_2(u)
\end{align}
and
\begin{align}
\label{young_integral_linearity_1}
\int_s^tS(t-u)(f_1(u)\!+\!f_2(u))dx(u)=\int_s^tS(t-u)f_1(u)dx(u)\!+\!\int_s^tS(t-u)f_2(u)dx(u)
\end{align}
for every $(s,t)\in [a,b]^2_{<}$}.
\end{remark}

Now, we prove that the Young integral \eqref{young_integral_def} can be split into the sum of two terms.

\begin{lemma}
\label{lem:div_int}
For every $f\in \hat C_k([a,b];X_{\beta})$, with $\beta\in [0,2)$ and $k\in (1-\eta,1)$, every
$(s,t)\in [a,b]^2_{<}$ and $\tau\in[s,t]$, it holds that
\begin{align*}
\int_s^tS(t-r)f(r)dx(r)=S(t-\tau)\int_s^\tau S(\tau-r)f(r)dx(r)+\int_\tau^tS(t-r)f(r)dx(r).
\end{align*}
\end{lemma}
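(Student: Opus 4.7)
My plan is to exploit the very definition of the Young convolution integral as a limit of Riemann sums, together with the semigroup property $S(t-t_i)=S(t-\tau)S(\tau-t_i)$, by restricting to partitions that already contain $\tau$ as a node.

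First I would dispose of the boundary cases $\tau=s$ and $\tau=t$, in which one of the integrals is over a degenerate interval and the identity reduces to a tautology. For $s<\tau<t$, I would pick any sequence of partitions $\Pi_n(s,t)=\{s=t_0^{(n)}<t_1^{(n)}<\cdots<t_{k_n}^{(n)}=\tau<t_{k_n+1}^{(n)}<\cdots<t_{m_n}^{(n)}=t\}$ with $|\Pi_n(s,t)|\to 0$, such that $\tau$ belongs to $\Pi_n(s,t)$ for every $n$. Since Theorem \ref{teo-2.4} guarantees that the limit defining ${\mathscr I}_{Sf}(s,t)$ exists and is independent of the chosen family of partitions, such a restricted sequence still produces the Young integral on $[s,t]$.

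Next, for each $n$ I would split the Riemann sum at index $k_n$:
\begin{align*}
\sum_{i=0}^{m_n-1} S(t-t_i^{(n)})f(t_i^{(n)})(\delta_1 x)(t_i^{(n)},t_{i+1}^{(n)})
= & \sum_{i=0}^{k_n-1}S(t-t_i^{(n)})f(t_i^{(n)})(\delta_1 x)(t_i^{(n)},t_{i+1}^{(n)})\\
& +\sum_{i=k_n}^{m_n-1}S(t-t_i^{(n)})f(t_i^{(n)})(\delta_1 x)(t_i^{(n)},t_{i+1}^{(n)}).
\end{align*}
In the first sum each $t_i^{(n)}\le \tau$, so the semigroup law gives $S(t-t_i^{(n)})=S(t-\tau)S(\tau-t_i^{(n)})$; hence $S(t-\tau)$ can be factored out (linearity and boundedness of $S(t-\tau)$ on $X$), turning the first sum into $S(t-\tau)$ applied to a Riemann sum associated with the partition $\{t_0^{(n)},\ldots,t_{k_n}^{(n)}\}$ of $[s,\tau]$ for the integrand $S(\tau-\cdot)f$. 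The second sum is itself a Riemann sum on the partition $\{t_{k_n}^{(n)},\ldots,t_{m_n}^{(n)}\}$ of $[\tau,t]$ for $S(t-\cdot)f$.

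Finally I would let $n\to\infty$. By Theorem \ref{teo-2.4} the two restricted Riemann sums converge in $X$ to $\int_s^\tau S(\tau-r)f(r)dx(r)$ and $\int_\tau^t S(t-r)f(r)dx(r)$ respectively, while the total sum converges to $\int_s^t S(t-r)f(r)dx(r)$; since $S(t-\tau)\in\mathscr{L}(X,X)$ by Hypothesis \ref{hyp-main}(ii)(a) with $\zeta=\alpha=0$, the limit passes through $S(t-\tau)$ on the first term, yielding the claimed identity. I anticipate no serious obstacle: the only delicate point is making sure that the mesh of each piece of the partition inherited from $\Pi_n(s,t)$ still tends to zero, which is immediate since $|\Pi_n(s,\tau)|,|\Pi_n(\tau,t)|\le |\Pi_n(s,t)|$, and that Theorem \ref{teo-2.4} applies on the sub-intervals — but the latter is clear since $f$ restricted to $[s,\tau]$ and $[\tau,t]$ still belongs to the spaces $\hat C_k(\cdot;X_\beta)$ with norms bounded by those on $[a,b]$.
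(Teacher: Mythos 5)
Your proposal is correct and follows essentially the same route as the paper's proof: restrict to partitions containing $\tau$, split the Riemann sum at $\tau$, factor out $S(t-\tau)$ via the semigroup law and its boundedness, and pass to the limit using the convergence guaranteed by Theorem \ref{teo-2.4}. The extra care you take about the sub-partition meshes and the applicability of Theorem \ref{teo-2.4} on the sub-intervals is sound and only makes the argument more explicit.
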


\begin{proof}
Fix $f$ as in the statement. If $\tau=s$ or $\tau=t$, then the assertion is straightforward. So, let us assume that $\tau\in(s,t)$.
By the definition of the Young integral, we can determine a sequence $\{\Pi_n(s,t)\}$ of partitions of the interval $[s,t]$ such that
\begin{align}
\label{young_integral_riemann_sum}
\int_s^tS(t-r)f(r)dx(r)
=\lim_{n\to +\infty}\sum_{i=0}^{k_n-1}S(t-t_i^n)f(t_i^n)(x(t_{i+1}^n)-x(t_i^n)),
\end{align}
where we have set $\Pi_n(s,t)=\{a=t_0^n<\cdots<t_{k_n}^n=b\}$. Without loss of generality, we can assume that the point $\tau$ belongs to the partition
$\Pi_n$ for every $n\in\N$. For each $n\in\N$, we denote by $h_n$ the index such that $t_{h_n}^n=\tau$. Then, we split
\begin{align*}
&\sum_{i=0}^{k_n-1}S(t-t_i^n)f(t_i^n)(x(t_{i+1}^n)-x(t_i^n))\\
=&\sum_{i=0}^{h_n-1}S(t-t_i^n)f(t_i^n)(x(t_{i+1}^n)-x(t_i^n))\\
&+\sum_{i=h_n}^{k_n-1}S(t-t_i^n)f(t_i^n)(x(t_{i+1}^n)-x(t_i^n)).
\end{align*}
Note that the second term in the right-hand side of the previous formula converges to
$\displaystyle\int_{\tau}^tS(t-u)f(u)dx(u)$ as $n$ tends to $+\infty$. On the other hand, applying the semigroup property we infer that
\begin{align*}
S(t-t_i^n)f(t_i^n)(x(t_{i+1}^n)-x(t_i^n))
= S(t-\tau)S(\tau-t_i^n)f(t_i^n)(x(t_{i+1}^n)-x(t_i^n))
\end{align*}
for every $i=0,\ldots,n_\tau$.
Therefore,
\begin{align}
&\lim_{n\to +\infty}\sum_{i=0}^{h_n-1}S(t-t_i^n)f(t_i^n)(x(t_{i+1}^n)-x(t_i^n))\notag\\
= & \lim_{n\to +\infty}S(t-\tau)\sum_{i=0}^{h_n-1}S(\tau-t_i^n)f(t_i^n)(x(t_{i+1}^n)-x(t_i^n)) \notag \\
= & S(t-\tau)\lim_{n\to +\infty}\sum_{i=0}^{h_n-1}S(\tau-t_i^n)f(t_i^n)(x(t_{i+1}^n)-x(t_i^n)) \notag \\
= & S(t-\tau)\int_s^\tau S(\tau-u)f(u)dx(u).
\label{young_integral_riemann_sum3}
\end{align}
From \eqref{young_integral_riemann_sum}-\eqref{young_integral_riemann_sum3} the assertion follows easily.
\end{proof}

\begin{corollary}
\label{integrale_ancorato}
For every $f\in \hat C_k([a,b];X_{\beta})$, with $k+\eta>1$ and $\beta\in [0,2)$, it holds that
\begin{align*}
(\hat \delta_1{\mathscr I}_{Sf}(a,\cdot))(s,t)={\mathscr I}_{Sf}(s,t)=\int_s^tS(t-r)f(r)dx(r), \qquad\;\,(s,t)\in [a,b]^2_{<}.
\end{align*}
\end{corollary}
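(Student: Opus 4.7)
The corollary is essentially a direct reading of Lemma \ref{lem:div_int} once we identify the ``indefinite integral'' $\Phi(t):={\mathscr I}_{Sf}(a,t)$ and recall the definition of $\hat\delta_1$. My plan is the following.

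First, observe that the assumptions $f\in\hat C_k([a,b];X_\beta)$ and $k+\eta>1$ are exactly what is needed for Theorem \ref{teo-2.4} to guarantee that ${\mathscr I}_{Sf}(s,t)$ is well defined as a Young-type convolution for every $(s,t)\in[a,b]^2_<$. In particular $\Phi$ is well defined on $(a,b]$, and we can set $\Phi(a)=0$. Expanding the definition of $\hat\delta_1$ yields
\begin{align*}
(\hat\delta_1\Phi)(s,t)=\Phi(t)-\Phi(s)-\mathfrak a(s,t)\Phi(s)=\Phi(t)-S(t-s)\Phi(s),
\end{align*}
for every $(s,t)\in[a,b]^2_<$, so the whole statement reduces to checking the identity
\begin{align*}
\Phi(t)-S(t-s)\Phi(s)={\mathscr I}_{Sf}(s,t).
\end{align*}

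Next, I would apply Lemma \ref{lem:div_int} to the interval $[a,t]$ with intermediate point $\tau=s$ (which lies in $[a,t]$ since $a\le s<t\le b$). The lemma gives
\begin{align*}
\int_a^t S(t-r)f(r)dx(r)=S(t-s)\int_a^s S(s-r)f(r)dx(r)+\int_s^t S(t-r)f(r)dx(r),
\end{align*}
which in the notation of the statement reads $\Phi(t)=S(t-s)\Phi(s)+{\mathscr I}_{Sf}(s,t)$. Rearranging produces exactly the desired identity and finishes the proof; the second equality in the corollary is just the definition of ${\mathscr I}_{Sf}$.

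Since Lemma \ref{lem:div_int} does all the heavy lifting, there is no real obstacle here: the argument is a one-line bookkeeping step. The only mild subtlety worth commenting on is the degenerate case $s=a$, in which $\Phi(s)=0$ and both sides of the identity trivially coincide with ${\mathscr I}_{Sf}(a,t)$; this is consistent with the convention fixing $\Phi(a)=0$ and with Remark \ref{rem-davide}, which in the non-convolutional setting provides the analogous characterisation $(\delta_1\Phi)(s,t)={\mathscr I}_f(s,t)$.
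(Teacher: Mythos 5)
Your proposal is correct and follows essentially the same route as the paper: expand the definition of $\hat\delta_1$ applied to $\Phi={\mathscr I}_{Sf}(a,\cdot)$ to get $\Phi(t)-S(t-s)\Phi(s)$, then invoke Lemma \ref{lem:div_int} on $[a,t]$ with intermediate point $s$ to identify this expression with ${\mathscr I}_{Sf}(s,t)$. Your extra remark on the degenerate case $s=a$ is harmless bookkeeping and does not change the argument.
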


\begin{proof}
From the definition of $\hat\delta_1$ and of ${\mathscr I}_{Sf}$ it follows that
\begin{eqnarray}
\label{delta_int_ancorato}
(\hat\delta_1{\mathscr I}_{Sf}(a,\cdot))(s,t)=\int_a^tS(t-r)f(r)dx(r)-S(t-s)\int_a^sS(s-r)f(r)dx(r)
\end{eqnarray}
for every $(s,t)\in [a,b]^2_{<}$. Applying Lemma \ref{lem:div_int} with $s=a$ and $\tau=s$ we infer that
\begin{align*}
\int_a^tS(t-r)f(r)dx(r)=S(t-s)\int_a^s S(s-r)f(r)dx(r)+\int_s^tS(t-r)f(r)dx(r),
\end{align*}
which combined with \eqref{delta_int_ancorato} yields the assertion.
\end{proof}

\section{Mild solution and smoothness}
\label{sect-3}
We consider the following assumptions on the nonlinear term $\sigma$.
\begin{hypothesis}
\label{ip_nonlineare}
The function $\sigma:X\to X$ is Fr\'echet differentiable with bounded and locally Lipschitz continuous Fr\'echet derivative. Moreover, the restriction of $\sigma$ to $X_{\alpha}$ maps this space into itself for some $\alpha\in (0,1)$ such that $\alpha+\eta>1$, it is locally Lipschitz continuous and there exists a positive constant  $L_\sigma^\alpha$ such that
\begin{align}
\label{ip_sigma}
|\sigma(x)|_\alpha\leq L_\sigma^{\alpha}(1+|x|_\alpha),\qquad\;\,x\in X_{\alpha}.
\end{align}
\end{hypothesis}

Hereafter, we assume that Hypothesis \ref{ip:young_path} with $a=0$ and $b=T>0$ and Hypothesis \ref{ip_nonlineare} hold true.

We consider the following nonlinear Young equation
\begin{align}
\label{omo_equation}
\left\{
\begin{array}{ll}
dy(t)=Ay(t)dt+\sigma(y(t))dx(t), &t\in(0,T],\\[1mm]
y(0)=\psi.
\end{array}
\right.
\end{align}
and we are interested in its mild and classical solutions, where by mild solution we mean a function $y:[0,T]\to X$ such that $\sigma(y)\in \hat C_{\alpha}([0,T];X)$, $\eta+\alpha>1$ and
\begin{equation}
y(t)=S(t)\psi+({\mathscr I}_{S\sigma(y)})(0,t),\qquad\;\,t\in [0,T].
\label{reg_sol-aaa}
\end{equation}

\begin{theorem}
\label{es_mild_sol_gub}
Let Hypotheses $\ref{hyp-main}$, $\ref{ip:young_path}$ and $\ref{ip_nonlineare}$ be satisfied, with $[a,b]=[0,T]$. Then, for every $\psi\in X_\alpha$ such that $\alpha\in (0,1/2)$ and $\eta+\alpha>1$, there exists a unique mild solution $y\in \hat C_\alpha([0,T];X_\alpha)$ to the stochastic equation \eqref{omo_equation}.
The solution $y$ is actually smoother since for every $a\in(0,T)$ and $\gamma\in [\eta+\alpha-1,\eta+\alpha)$, $y$ belongs to $\hat C_{\eta+\alpha-\gamma}([a,T];X_{\gamma})$.
Moreover, for every $\mu\in[0,\eta+\alpha-1)$ and $\varepsilon>0$ there exists a positive constant $c=c(\varepsilon,\mu)$ such that
\begin{eqnarray}
\label{stima_lambda_mu}
|y(t)|_{1+\mu}\leq ct^{\eta+\alpha-2-\mu-\varepsilon},\qquad\;\,t\in(0,T].
\end{eqnarray}
In particular, $y$ is a classical solution to \eqref{omo_equation}, i.e., $y(t)\in D(A)$ for every $t\in (0,T]$ and
it belongs to $C_{\eta-\beta}([a,T];X_{\alpha+\beta})$ for every $a\in (0,T)$ and $\beta\in [0,\eta)$.
\end{theorem}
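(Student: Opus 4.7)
The plan is to split the proof into three tasks: local-to-global existence and uniqueness by a Banach fixed-point argument, extra spatial regularity on $[a,T]$ via Lemma \ref{lem:div_int}, and the blow-up estimate at $t=0^+$ (from which the classical-solution property and the final regularity statement both follow). Throughout I will rely on Hypothesis \ref{ip_nonlineare} to treat $\sigma$ and on Theorem \ref{teo-2.4}, Lemma \ref{thm:young_integral} and estimate \eqref{stima_sewing_map_conv} to handle the Young convolutions.

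First, I set $\Gamma(y)(t):=S(t)\psi+\mathscr{I}_{S\sigma(y)}(0,t)$. Since $\sigma$ is Lipschitz on bounded sets of $X_\alpha$ with linear growth and has bounded Fr\'echet derivative on $X$, the composition $\sigma\circ y$ lies in $\hat C_\alpha([0,T_0];X_\alpha)$ whenever $y$ does, and Theorem \ref{teo-2.4} with $k=\beta=\alpha$ (so $k+\eta>1$) yields a bound of the form $\|\mathscr{I}_{S\sigma(y)}\|_{\alpha,\alpha,[0,T_0]}\le C\|x\|_{C^\eta}(1+\|y\|_{\alpha,\alpha,[0,T_0]})T_0^{\kappa}$ for some $\kappa>0$, together with a Lipschitz counterpart in $y$. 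Choosing $T_0$ small enough makes $\Gamma$ a contraction on a closed ball of $\hat C_\alpha([0,T_0];X_\alpha)$, producing a unique local mild solution; the linear growth of $\sigma$ then provides an a priori bound allowing extension to the whole of $[0,T]$.

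Second, on $[a,T]$ I apply Lemma \ref{lem:div_int} with $\tau=a/2$ and the semigroup property to rewrite the mild formula as $y(t)=S(t-a/2)y(a/2)+\mathscr{I}_{S\sigma(y)}(a/2,t)$ for $t\in[a/2,T]$. Since the semigroup piece has vanishing $\hat\delta_1$ and $\mathscr{I}_{S\sigma(y)}(a/2,\cdot)$ is anchored by Corollary \ref{integrale_ancorato}, this gives $\hat\delta_1 y(s,t)=\mathscr{I}_{S\sigma(y)}(s,t)$ on $[a,T]^2_<$. Decomposing further via Theorem \ref{teo-2.4}, the principal part $S(t-s)\sigma(y(s))(x(t)-x(s))$ is bounded in $X_\gamma$ by $C(t-s)^{\eta+\alpha-\gamma}$ using the smoothing estimate \eqref{stime_smgr}(a) applied to $\sigma(y(s))\in X_\alpha$, and the sewing remainder is of higher order by \eqref{stima_sewing_map_conv} with $\varepsilon=\gamma-\alpha\in[0,1)$. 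This yields $y\in\hat C_{\eta+\alpha-\gamma}([a,T];X_\gamma)$ for each $\gamma\in[\eta+\alpha-1,\eta+\alpha)$.

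Third, for the blow-up at $t=0^+$ I apply Lemma \ref{lem:div_int} once more with $s=0$, $\tau=t/2$, to split $\mathscr{I}_{S\sigma(y)}(0,t)=S(t/2)\mathscr{I}_{S\sigma(y)}(0,t/2)+\mathscr{I}_{S\sigma(y)}(t/2,t)$. The first summand is controlled in $X_{1+\mu}$ by $(t/2)^{\alpha-1-\mu}$ via smoothing of the $X_\alpha$-bounded integral on $[0,t/2]$; the second by $(t/2)^{\eta+\alpha-1-\mu}$ via the estimate just obtained (valid since $1+\mu<\eta+\alpha$). Combined with the analogous smoothing of $S(t)\psi$, one gets the announced blow-up rate $ct^{\eta+\alpha-2-\mu-\varepsilon}$, the $\varepsilon$-loss absorbing the constants as $1+\mu$ approaches $\eta+\alpha$. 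Since $X_{1+\mu}\hookrightarrow D(A)$ the solution is classical, and revisiting the previous step with target space $X_{\alpha+\beta}$ yields $y\in C_{\eta-\beta}([a,T];X_{\alpha+\beta})$. The hardest part will be to keep the sewing parameter $\varepsilon$ in its admissible range $[0,1)$ while pushing the target regularity $\gamma$ across the threshold $\gamma=1$; this is precisely where Lemma \ref{lem:div_int} is indispensable, by letting one trade a shortening of the time interval for a strong spatial smoothing from the semigroup.
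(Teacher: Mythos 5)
Your fixed-point and extension scheme in the first step matches the paper's (a priori bound plus contraction on small intervals), but there is a genuine gap in the regularity part, and it propagates to the blow-up estimate and to the classical-solution claim. The gap is the assertion that $\sigma\circ y\in\hat C_\alpha([0,T_0];X_\alpha)$, and the related claim that the sewing remainder can be placed in $X_\gamma$ by taking ``$\varepsilon=\gamma-\alpha$'' in \eqref{stima_sewing_map_conv}. Hypothesis \ref{ip_nonlineare} only gives that $\sigma$ maps $X_\alpha$ into $X_\alpha$; to bound $|\mathfrak a(s,t)\sigma(y(s))|_\alpha$ by a positive power of $t-s$ one would need $\sigma(y(s))$ to lie in a space strictly smaller than $X_\alpha$, which is not available, so $\hat\delta_1(\sigma(y))$ can only be measured in $X_\beta$ with $\beta<\alpha$ (the paper takes $\beta=0$ initially, see \eqref{stima_sigma_1}). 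Consequently the gain in \eqref{stima_sewing_map_conv} is $\varepsilon\in[0,1)$ \emph{above} $\beta$, not above $\alpha$: one application of Theorem \ref{teo-2.4}/Lemma \ref{thm:young_integral} with $k=\beta_1=\alpha$, $\beta=0$ only yields ${\mathscr I}_{S\sigma(y)}\in C_{\eta+\alpha-r}(\cdot;X_r)$ for $r<1$, and therefore cannot reach the stated range $\gamma\in[\eta+\alpha-1,\eta+\alpha)$, which contains exponents $\geq 1$ whenever $\eta+\alpha>... $ well, precisely the exponents needed for $y(t)\in D(A)$. Your third step inherits the same problem: the bound of ${\mathscr I}_{S\sigma(y)}(t/2,t)$ in $X_{1+\mu}$ presupposes exactly the above-threshold estimate that has not been established, and it also needs the constants on $[t/2,t]$ to be tracked as $t\to 0^+$ (this is where the $\varepsilon$-loss in \eqref{stima_lambda_mu} actually originates, not from ``$1+\mu$ approaching $\eta+\alpha$'').

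What is missing is the paper's bootstrap. First, from the one-shot estimate one gets $|y(t)|_r\le c\,t^{\alpha-r}$ for $r\in[\alpha,1)$ (see \eqref{reg_sol}). Then, for $\lambda\in[0,\eta+\alpha-1)$ (note $\lambda<\alpha$), one upgrades the increments of the integrand: using the local Lipschitz continuity of $\sigma$ on $X_\alpha$, the embedding $X_\alpha\hookrightarrow X_\lambda$, the bound $|\mathfrak a(s,t)y(s)|_\alpha\lesssim s^{\lambda-\alpha}|t-s|^{\alpha-\lambda}$ (via $y(s)\in X_{2\alpha-\lambda}$), and \eqref{ip_sigma} for the term $\mathfrak a(s,t)\sigma(y(s))$, one shows $\hat\delta_1(\sigma(y))\in C_{\alpha-\lambda}([\varepsilon,T]^2_<;X_\lambda)$ with norm of order $\varepsilon^{\lambda-\alpha}$ (estimates \eqref{stima_a1}--\eqref{stima_b}); note that $\alpha-\lambda>1-\eta$, so the Young convolution still makes sense. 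Re-applying Lemma \ref{thm:young_integral} with $k=\alpha-\lambda$, $\beta=\lambda$, $\beta_1=\alpha$ then lands ${\mathscr I}_{S\sigma(y)}$ in $X_{\gamma+\lambda}$ with $\gamma+\lambda$ ranging over $[\eta+\alpha-1,\eta+\alpha)$, i.e.\ past the threshold $1$, and combining with $y(t)=S(t-\varepsilon)y(\varepsilon)+{\mathscr I}_{S\sigma(y)}(\varepsilon,t)$ and the choice $\varepsilon=t/2$ yields \eqref{stima_lambda_mu}. Your splitting via Lemma \ref{lem:div_int} is the right device, but without this intermediate upgrade of $\hat\delta_1(\sigma(y))$ from $X_0$ to $X_\lambda$ the argument cannot cross $\gamma=1$, and the classical-solution conclusion does not follow.
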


The proof follows the lines of \cite[Theorem 4.3]{GT10}, but our assumptions are weaker. In particular, in \cite{GT10} the authors assume that $\eta>2\alpha$, while we do not need this condition.

Before proving Theorem \ref{es_mild_sol_gub}, we state the following lemma, which is a straightforward consequence of Lemma \ref{lem:div_int}.

\begin{lemma}
Suppose that $y$ is a mild solution to \eqref{omo_equation}. Then, for every $\tau\in[0,T]$ it holds that
\begin{align}
\label{scomp_soluzione}
y(t)=S(t-\tau)y(\tau)+\int_\tau^t S(t-r)(\sigma(y(r)))dx(r), \qquad\;\, t\in[\tau,T].
\end{align}
\end{lemma}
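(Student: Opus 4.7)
The plan is to combine the mild formulation \eqref{reg_sol-aaa} with the splitting property of the convolution Young integral provided by Lemma \ref{lem:div_int}. Since $y$ is a mild solution, $\sigma(y)\in \hat C_\alpha([0,T];X)$ with $\alpha+\eta>1$, so the hypotheses of Lemma \ref{lem:div_int} (with $\beta=0$ and $k=\alpha$) are satisfied by the function $f(r):=\sigma(y(r))$. One then has to check that the identity is trivial in the endpoint cases $\tau=0$ and $\tau=t$ (these are immediate from \eqref{reg_sol-aaa} and from the convention that the integral over a degenerate interval vanishes), and work with $\tau\in(0,t)$.

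For the main case, I would first apply Lemma \ref{lem:div_int} on $[0,t]$ with intermediate point $\tau$ to the convolution integral appearing in \eqref{reg_sol-aaa}, obtaining
\begin{align*}
\int_0^t S(t-r)\sigma(y(r))dx(r)=S(t-\tau)\int_0^\tau S(\tau-r)\sigma(y(r))dx(r)+\int_\tau^t S(t-r)\sigma(y(r))dx(r).
\end{align*}
From the mild formulation at time $\tau$, namely $y(\tau)=S(\tau)\psi+\int_0^\tau S(\tau-r)\sigma(y(r))dx(r)$, I can solve for the convolution integral over $[0,\tau]$ and substitute it back. Then, plugging into \eqref{reg_sol-aaa}, the contribution $S(t)\psi-S(t-\tau)S(\tau)\psi$ vanishes by the semigroup property, and one is left precisely with \eqref{scomp_soluzione}.

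The main (and essentially only) obstacle is merely bookkeeping: one must be careful that the Young convolution integrals appearing at each intermediate step are well-defined in the sense of Theorem \ref{teo-2.4}, which is guaranteed by the regularity $\sigma(y)\in \hat C_\alpha([0,T];X)$ inherited from the definition of mild solution and by $\eta+\alpha>1$. No additional estimates or fixed-point arguments are required, so the proof is effectively a one-line application of Lemma \ref{lem:div_int} combined with the semigroup property $S(t-\tau)S(\tau)=S(t)$.
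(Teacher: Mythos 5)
Your proof is correct and follows exactly the route the paper intends: the authors state the lemma as a straightforward consequence of Lemma \ref{lem:div_int}, and your argument (split the convolution at $\tau$ via Lemma \ref{lem:div_int}, use the mild identity at time $\tau$ to replace $\int_0^\tau S(\tau-r)\sigma(y(r))dx(r)$ by $y(\tau)-S(\tau)\psi$, and cancel via $S(t-\tau)S(\tau)=S(t)$) is precisely that straightforward consequence. Nothing is missing.
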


\begin{proof}[Proof of Theorem $\ref{es_mild_sol_gub}$]
We split the proof into some steps.

{\em Step 1.} Here, we prove an apriori estimate. Namely, we show that if $y\in \hat C_\alpha([0,T];X_\alpha)$ is a mild solution to \eqref{omo_equation}, then there exists a positive constant $\mathfrak R$, which depends only on $\psi$, $T$, $\alpha$, $x$, $\eta$ and $\sigma$, such that
\begin{align}
\label{aprioir_est_sol}
\|y\|_{\alpha,\alpha,[0,T]}\leq \mathfrak R.
\end{align}

Let us fix $a,b\in [0,T]$, with $a<b$.
Taking Corollary \ref{integrale_ancorato} into account, it is easy to check that $(\hat\delta_1y)(s,t)=({\mathscr I}_{S\sigma(y)})(s,t)$ for every $(s,t)\in [0,T]^2_{<}$. Hence, to estimate $\|\hat\delta_1y\|_{\alpha|\alpha,[a,b]}$ we can take advantage of Lemma \ref{thm:young_integral}.
For this purpose, let us prove that $\sigma(y)$ belongs to $\hat C_\alpha([a,b];X)\cap C([a,b];X_{\alpha})$. The condition $\sigma(y)\in C([a,b];X_\alpha)$ follows immediately from \eqref{ip_sigma}, which also shows that
\begin{align}
\|\sigma(y)\|_{\alpha,[a,b]}\leq L_\sigma^{\alpha}(1+\|y\|_{\alpha,[a,b]}).
\label{stima_sigma_2}
\end{align}
Further, we note that the function $\hat\delta_1(\sigma(y))$ is continuous in $[0,T]$ with values in $X$. Indeed, fix $(t_0,s_0)\in [a,b]_{<}^2$. Then,
\begin{align}
&|(\hat\delta_1(\sigma(y))(t,s)-(\hat\delta_1(\sigma(y)))(t_0,s_0)|_0\notag\\
\le &|(\sigma(y(t))-(\sigma(y(t_0))|_0+|S(t-s)\sigma(y(s))-S(t_0-s_0)\sigma(y(s_0))|_0\notag\\
\le &L|y(t)-y(t_0)|_0+\|S(t-s)\|_{{\mathscr L}(X_0)}|\sigma(y(s))-\sigma(y(s_0))|_0 \notag \\
&+|(S(t-s)-S(t_0-s_0))\sigma(y(s_0))|_0\notag\\
\le &L|y(t)-y(t_0)|_0+L M_{0,0}|y(s)-y(s_0)|_{0}
+2C_{\alpha,0}
|\sigma(y(s_0))|_{\alpha}|t-t_0|^{\alpha}
\label{video}
\end{align}
for every $(t,s)\in [0,T]^2_{<}$, where $L$ denotes the Lipschitz constant of $\sigma$ on $X_0$, and the last side of the previous chain of inequalities vanishes as $(t,s)$ tends to $(t_0,s_0)$. Next, we split
\begin{align*}
(\hat\delta_1\sigma(y))(s,t)=(\delta_1\sigma(y))(s,t)-\mathfrak{a}(s,t)\sigma(y(s)), \qquad\;\, (s,t)\in [0,T]^2_{<}.
\end{align*}
and estimate separately the two terms. As far as the first one is considered, we observe that
\begin{align}
\label{stima_delta}
|(\delta_1\sigma(y))(s,t)|_0
= & |\sigma(y(t))-\sigma(y(s))|_0\notag\\
\le &L_{\sigma}|y(t)-y(s)|_0\notag\\
\leq &L_\sigma(|(\hat\delta_1 y)(s,t)|_0+|\mathfrak{a}(s,t)y(s)|_0)\notag \\
\leq & L_\sigma(1+C_{\alpha,0,T})\|y\|_{\alpha,\alpha,[a,b]}|t-s|^\alpha
\end{align}
for every $(s,t)\in [a,b]^2_{<}$,
where $C_{\alpha,0,T}$ is the constant in condition \eqref{stime_smgr}$(b)$ and $L_{\sigma}$ denotes the Lipschitz constant of the function $\sigma$.
As far as the term $\mathfrak{a}(s,t)\sigma(y(s))$ is concerned, we use \eqref{ip_sigma} to  estimate
\begin{align*}
|\mathfrak{a}(s,t)\sigma(y(s))|_0
\le C_{\alpha,0,T}|\sigma(y(s))|_\alpha|t-s|^\alpha
\leq C_{\alpha,0,T}L_{\sigma}^{\alpha}(1+\|y\|_{\alpha,\alpha,[a,b]})|t-s|^\alpha
\end{align*}
for every $(s,t)\in [a,b]^2_{<}$. We have so proved that $\sigma(y)\in\hat C_{\alpha}([a,b];X)$ and
\begin{align}
\label{stima_sigma_1}
\|(\hat\delta_1\sigma)(y)\|_{\alpha|0,[a,b]}\leq (L_\sigma+L_{\sigma}^{\alpha})(1+C_{\alpha,0,T})(1+\|y\|_{\alpha,\alpha,[a,b]}).
\end{align}

Thus, we can apply Lemma \ref{thm:young_integral} as claimed, with $k=\beta_1=\alpha$ and $\beta=0$, to infer that
${\mathscr I}_{S\sigma(y)}$ belongs to $C_{\eta+\alpha-r}([a,b]_<^2;X_r)$ for every $r\in[\alpha,1)$ and
\begin{align}
\|{\mathscr I}_{S\sigma(y)}\|_{\eta+\alpha-r|r,[a,b]}\le &C_{\alpha,\eta,r,\alpha}\|x\|_{C^{\eta}([0,T])}(\|(\hat\delta_1\sigma)(y)\|_{\alpha|0,[a,b]}+\|\sigma(y)\|_{\alpha,[a,b]})\notag\\
\le & C_{\alpha,\eta,r,\alpha}\|x\|_{C^{\eta}([0,T])}(L_{\sigma}+L_{\sigma}^{\alpha})(2+C_{\alpha,0,T})(1+\|y\|_{\alpha,\alpha,[a,b]}).
\label{stima-utile}
\end{align}
Since $\alpha<1/2<\eta$, it follows that
$\|{\mathscr I}_{S\sigma(y)}\|_{\alpha|\alpha,[a,b]}\leq (b-a)^{\eta-\alpha}\|{\mathscr I}_{S\sigma(y)}\|_{\eta|\alpha,[a,b]}$, so that,
applying \eqref{stima-utile} with $r=\alpha$, we conclude that
\begin{align}
\|\hat\delta_1y\|_{\alpha|\alpha,[a,b]}=\|{\mathscr I}_{S\sigma(y)}\|_{\alpha|\alpha,[a,b]}
\leq \mathfrak C (b-a)^{\eta-\alpha} \|x\|_\eta (1+\|y\|_{\alpha,\alpha,[a,b]}),
\label{puntino}
\end{align}
where $\mathfrak C:=C_{\alpha,\eta,r,\alpha}(L_\sigma+L_\sigma^\alpha)(2+C_{\alpha,0,T})$. Further, from \eqref{scomp_soluzione} with $\tau=a$, $t\in[a,b]$ and Corollary \ref{integrale_ancorato}, we get
\begin{align}
\|y\|_{\alpha,[a,b]}
\le & M_{\alpha,\alpha,b}|y(a)|_\alpha+\|(\hat\delta_1y)(a,\cdot)\|_{\alpha,[a,b]}\notag\\
\leq & M_{\alpha,\alpha,T}|y(a)|_\alpha+(b-a)^{\alpha}\|\hat\delta_1y\|_{\alpha|\alpha,[a,b]}\notag\\
\leq & M_{\alpha,\alpha,T}|y(a)|_\alpha+\mathfrak C(b-a)^\eta   \|x\|_\eta(1+\|y\|_{\alpha,\alpha,[a,b]}).
\label{duepuntini}
\end{align}
Taking \eqref{puntino} and \eqref{duepuntini} into account, this gives
\begin{align}
\|y\|_{\alpha,\alpha,[a,b]}
\leq & M_{\alpha,\alpha,T}|y(a)|_\alpha+\mathfrak C(b-a)^{\eta-\alpha}(1+(b-a)^{\alpha}) \|x\|_\eta (1+\|y\|_{\alpha,\alpha,[a,b]})\notag\\
\leq & M_{\alpha,\alpha,T}|y(a)|_\alpha+\mathfrak C(b-a)^{\eta-\alpha}(1+T^{\alpha}) \|x\|_\eta (1+\|y\|_{\alpha,\alpha,[a,b]}).
\label{cadabra-3}
\end{align}
Let us set
\begin{align*}
\overline T=\bigg (\frac{1}{2\mathfrak C(1+T^\alpha)\|x\|_\eta}\bigg )^{\frac{1}{\eta-\alpha}}.
\end{align*}
If $b-a\le \overline T$, then we get
\begin{align}
\|y\|_{\alpha,\alpha,[a,b]}
\leq 2M_{\alpha,\alpha,T}|y(a)|_\alpha+1.
\label{idea}
\end{align}
Now, we introduce the function $\phi:(0,\infty)\to (0,\infty)$, defined by
$\phi(r)=2M_{\alpha,\alpha,T} r+1$ for every $r>0$ and split
\begin{align*}
[0,T]=\bigcup_{n=0}^{N-1}[t_n,t_{n+1}],
\end{align*}
where $0=t_0<t_1<t_2<\ldots<t_N=T$ and $t_{n+1}-t_n\leq \overline T$ for every $n=0,\ldots,N-1$. From \eqref{idea} it follows that
\begin{align}
\|y\|_{\alpha,[t_n,t_{n+1}]}
\leq \phi(|y(t_n)|_\alpha)\leq \phi^{n+1}(|\psi|_\alpha),
\label{ibra}
\end{align}
for every $n=0,\ldots,N-1$, where $\phi^k$ denotes the composition of $\phi$ with itself $k$ times.
Since $\phi(r)>r$ for every $r>0$, from \eqref{ibra} we conclude that
\begin{align}
\|y\|_{\alpha,[0,T]}
\leq \phi^N(|\psi|_\alpha).
\label{cadabra}
\end{align}
In particular, for each interval $[s,t]\subset[0,T]$ whose length is less than or equal to $\overline T$ we get
\begin{align*}
\|y\|_{\alpha,\alpha,[s,t]}
\leq 2M_{\alpha,\alpha,T}\phi^N(|y(s)|_\alpha)+1\le \phi^{N+1}(|\psi|_{\alpha}).
\end{align*}
Now we are able to estimate $\|\hat\delta_1y\|_{\alpha|\alpha,[0,T]}$. We stress that, if $|t-s|\leq \overline T$, then from \eqref{idea} we get
\begin{align}
{|(\hat\delta_1y)(s,t)|_\alpha}
\leq \phi^{N+1}(|\psi|_\alpha)|t-s|^\alpha,
\label{cadabra-1}
\end{align}
and if $|t-s|>\overline T$ then
\begin{align}
\frac{|(\hat\delta_1y)(s,t)|_\alpha}{|t-s|^\alpha}
\leq \frac{|y(t)-S(t-s)y(s)|_{\alpha}}{\overline T^{\alpha}}
\leq \frac{(1+M_{\alpha,\alpha,T})\phi^N(|\psi|_{\alpha})}{\overline T^\alpha}.
\label{cadabra-2}
\end{align}

From \eqref{cadabra}, \eqref{cadabra-1} and \eqref{cadabra-2} it follows that
\begin{align*}
\|y\|_{\alpha,\alpha,[0,T]}\le \phi^N(|\psi|_{\alpha})+\max\{\phi^{N+1}(|\psi|_{\alpha}),\overline T^{-\alpha}(1+M_{\alpha,\alpha,T})\phi^N(|\psi|_{\alpha})\}=:{\mathfrak R}.
\end{align*}

{\em Step 2.} Here, we prove that there exists a unique mild solution to the equation \eqref{omo_equation}.
For this purpose, we introduce the operator $\Gamma_1:\hat C_\alpha([0,T_*];X_\alpha)\rightarrow \hat C_\alpha([0,T_*];X_\alpha)$, defined by
$(\Gamma_1(y))(t)=S(t)\psi+{\mathscr I}_{S\sigma(y)}(0,t)$ for every $t\in (0,T_*]$ and $(\Gamma_1(y))(0)=\psi$,
where $T_*\in(0,T]$ has to be properly chosen later on.
We are going to prove that $\Gamma_1$ is a contraction in $\mathcal B=\{y\in\hat C_\alpha([0,T_*];X_\alpha):\|y\|_{\alpha,\alpha,[0,T_*]}\leq 2M_{\alpha,\alpha,T}\mathfrak R\}$. To begin with, we fix $y\in \mathcal B$ and observe that $\hat\delta_1\Gamma_1(y)={\mathscr I}_{S\sigma(y)}$. Hence, from \eqref{cadabra-3}, we can estimate
\begin{align}
\|\Gamma_1(y)\|_{\alpha,\alpha,[0,T_*]}
\leq & M_{\alpha,\alpha,T}|\psi|_\alpha+\mathfrak{C}T_*^{\eta-\alpha}(1+T_*^{\alpha})\|x\|_{C^{\eta}([0,T])}(1+\|y\|_{\alpha,\alpha,[0,T_*]})\notag \\
\leq & M_{\alpha,\alpha,T}\mathfrak{R}+\mathfrak{C}T_*^{\eta-\alpha}(1+T^\alpha)\|x\|_{C^{\eta}([0,T])}(1+2M_{\alpha,\alpha,T}\mathfrak R).
\label{punto_fisso_stima_2}
\end{align}
We now choose $T_*\le T$ such that $\mathfrak{C}T_*^{\eta-\alpha}(1+T^\alpha)\|x\|_{C^{\eta}([0,T])}(1+2M_{\alpha,\alpha,T}\mathfrak R)\le M_{\alpha,\alpha,T}{\mathfrak R}$. With this choice of $T_*$, we conclude that $\Gamma_1(y)$ belongs to ${\mathcal B}$.

Let us prove that $\Gamma_1$ is a $1/2$-contraction. Fix $y_1,y_2\in \mathcal B$. The linearity of the Young integral gives
$(\Gamma_1(y_1))(t)-(\Gamma_1(y_2))(t)={\mathscr I}_{S(\sigma(y_1)-\sigma(y_2))}(0,t)$ for every $t\in[0,T_*]$,
so that we can estimate
\begin{align}
\label{controllo_stima}
\|\Gamma_1(y_1)-\Gamma_1(y_2)\|_\alpha\le T_*^{\eta}\|{\mathscr I}_{S(\sigma(y_1)-\sigma(y_2))}\|_{\eta|\alpha,[0,T_*]}
\end{align}
and, as in Step 1 (see the first inequality in \eqref{stima-utile}),
\begin{align}
&\|\Gamma_1(y_1)-\Gamma_1(y_2)\|_{\eta|\alpha,[0,T_*]}\notag\\
\le & C_{\alpha,\eta}\|x\|_{C^{\eta}([0,T])}(\|\hat\delta_1(\sigma(y_1)-\sigma(y_2))\|_{\alpha|0,[0,T_*]}+\|\sigma(y_1)-\sigma(y_2)\|_{\alpha,[0,T_*]}).
\label{stima_Mario}
\end{align}
We set $R:=2M_{\alpha,\alpha,T}\mathfrak{R}\geq\max\{\|y_1\|_{\alpha,[0,T_*]},\|y_2\|_{\alpha,[0,T_*]}\}$ and note that
\begin{align}
|\mathfrak{a}(s,t)(\sigma(y_1(s))-\sigma(y_2(s)))|_0
\leq & C_{\alpha,0,T}|t-s|^\alpha|\sigma(y_1(s))-\sigma(y_2(s))|_{\alpha}\notag\\
\leq & C_{\alpha,0,T}L_\sigma^{\alpha,R}|t-s|^\alpha\|y_1-y_2\|_{\alpha,[0,T_*]},
\label{approx_stima_1_1}
\end{align}
where $L_\sigma^{\alpha,r}$ denotes the Lipschitz constant of the restriction of $\sigma$ to the ball $B(0,r)\subset X_{\alpha}$ and we have used the condition
\eqref{stime_smgr}(b). Further, by taking advantage of the smoothness of $\sigma$ we get
\begin{align}
&(\delta_1(\sigma(y_1)-\sigma(y_2))(s,t))\notag\\
= & \sigma(y_1(s)+(\delta_1y_1)(s,t))-\sigma(y_1(s))-\sigma(y_2(s)+(\delta_1y_1)(s,t)) +\sigma(y_2(s))\notag\\
&+\sigma(y_2(s)+(\delta_1y_1)(s,t))-\sigma(y_2(s)+(\delta_1y_2)(s,t))\notag\\
= & \int_0^1\langle \sigma'(y_1(s)+r(\delta_1y_1)(s,t))-\sigma'(y_2(s)+r(\delta_1y_1)(s,t)),\delta_1y_1(s,t)\rangle dr\notag\\
& +\sigma(y_2(s)+(\delta_1y_1)(s,t))-\sigma(y_2(s)+(\delta_1y_2)(s,t)).
\label{A}
\end{align}
Since for every $s,t\in[0,\overline T]$, with $s<t$, and $r\in(0,1)$, it holds that
\begin{align*}
|y_1(s)+r\delta_1y_1(s,t)|_\alpha\vee
|y_2(s)+r\delta_1y_1(s,t)|_\alpha
\leq 3R,
\end{align*}
and recalling that $R\ge 1$, it follows that
\begin{align}
&|\delta_1(\sigma(y_1)-\sigma(y_2))(s,t)|_0\notag\\
\leq & K_{\sigma'}^R\|y_1-y_2\|_{0,[0,T_*]}|(\delta_1y_1)(s,t)|_0 +L_\sigma|(\delta_1(y_1- y_2))(s,t)|_0\notag \\
\leq & K_{\sigma'}^R\|y_1-y_2\|_{0,[0,T_*]}(|(\hat\delta_1y_1)(s,t)|_0+|\mathfrak{a}(s,t)y_1(s)|_0)\notag\\
& +L_{\sigma}(|(\hat\delta_1(y_1-y_2))(s,t)|_0+|\mathfrak{a}(s,t)(y_1(s)-y_2(s))|_0)\notag\\
\le &K_{\sigma'}^R(\|y_1\|_{\alpha,\alpha,[0,T_*]}+C_{\alpha,0,T} \|y_1\|_{\alpha,[0,T_*]})\|y_1-y_2\|_{0,[0,T_*]}|t-s|^\alpha\notag\\
&+L_{\sigma}(\|y_1-y_2\|_{\alpha,\alpha,[0,T_*]}\!+\!C_{\alpha,0,T} \|y_1-y_2\|_{\alpha,[0,T_*]})|t-s|^\alpha\notag\\
\le & (1+C_{\alpha,0,T})R(K_{\sigma'}^R+L_{\sigma})\|y_1-y_2\|_{\alpha,\alpha,[0,T_*]}|t-s|^\alpha,
\label{approx_stima_2_1}
\end{align}
where $K_{\sigma'}^R$ denotes the Lipschitz constant of the restriction of function $\sigma'$ to the ball $B(3K_{\alpha,0}R)\subset X$. As far as $\|\sigma(y_1)-\sigma(y_2)\|_{\alpha,[0,T_*]}$ in \eqref{stima_Mario} is concerned, it holds that
\begin{align}
\label{approx_stima_2_2}
|\sigma(y_1(t))-\sigma(y_2(t))|_\alpha
\le L_\sigma^{\alpha,R}\|y_1-y_2\|_{\alpha,\alpha,[0,T_*]}
\end{align}
for every $t\in[0,T_*]$.
From \eqref{stima_Mario}, \eqref{approx_stima_1_1}, \eqref{approx_stima_2_1} and \eqref{approx_stima_2_2} we get
\begin{align}
\label{stima_seconda_parte}
\|\Gamma_1(y_1)-\Gamma_1(y_2)\|_{\alpha|\alpha,[0,T_*]}
\le & T_*^{\eta-\alpha}\|{\mathscr I}_{S(\sigma(y_1)-\sigma(y_2))}\|_{\eta|\alpha,[0,T_*]}\notag\\
\le & cT_*^{\eta-\alpha}\|y_1-y_2\|_{\alpha,\alpha,[0,T_*]},
\end{align}
where $c$ is a positive constant which depends on $x, \alpha,R,\sigma,\eta$ but not on $T_*$ nor on $\psi$.

Based on \eqref{punto_fisso_stima_2} and \eqref{stima_seconda_parte}, we can now fix $T_*>0$ such that $\Gamma_1$ is a $1/2$-contraction in $\mathcal B$.
If $T_*=T$, then we are done. Otherwise, we use a standard procedure to extend the solution of the stochastic equation \eqref{omo_equation}: we introduce the operator $\Gamma_2$ defined by
\begin{align*}
(\Gamma_2(y))(t)=S(t-T_*)y_1(T_*)+{\mathscr I}_{S\sigma(y)}(T_*,t), \qquad T_*\leq t\leq T_{**}:=\min\{2T_*,T\},
\end{align*}
for every $y\in \mathcal B_2:=\{z\in \hat C_{\alpha}([T_*,T_{**}];X_\alpha):\|y\|_{\alpha,\alpha}\leq 2M_{\alpha,\alpha,T}\mathfrak R\}$. Since $y_1$ is a mild solution to \eqref{omo_equation}, from \eqref{aprioir_est_sol}, which clearly holds true also with $T_*<T$ and the same constant $\mathfrak R$,  it follows that $|y_1(T_*)|_\alpha\leq \mathfrak R$. Then, by the same computations as above we show that $\Gamma_2$ is a $1/2$-contraction in $\mathcal B_2$. Denote by $y_2$ its unique fixed point.
Thanks to Lemma \ref{lem:div_int}, the function $y$ defined by $y(t)=y_1(t)$ if $t\in [0,T_*]$ and $y(t)=y_2(t)$ if $t\in [T_*, T_{**}]$ is a mild solution to equation \eqref{omo_equation} in $[0,T_{**}]$. Obviously, if $T_{**}<T$, then we can repeat the same procedure and in a finite number of steps we extend $y$ to whole $[0,T]$. Estimate \eqref{aprioir_est_sol} yields also the uniqueness of the mild solution to equation \eqref{omo_equation}.

{\em Step 3.}
From the arguments in the first part of Step 1 (see \eqref{stima-utile}), we deduce that ${\mathscr I}_{S\sigma(y)}$ belongs to $C_{\eta+\alpha-r}([0,T]_<^2;X_r)$ for every $r\in[\alpha,1)$.
The smoothing properties of the semigroup $(S(t))_{t\ge 0}$ (see condition \eqref{stime_smgr}(a)), estimates \eqref{aprioir_est_sol} and \eqref{stima-utile} show that $y(t)\in X_r$ and
\begin{align}
\label{reg_sol}
|y(t)|_r\le & |S(t)\psi|_r+|{\mathscr I}_{S\sigma(y)}(0,t)|_r\notag\\
\le & M_{\alpha,r,T} t^{\alpha-r}|\psi|_{\alpha}+\|{\mathscr I}_{S\sigma(y)}\|_{\eta+\alpha-r|r,[0,T]}t^{\eta+\alpha-r}\notag\\
\le & c_1(1+T^{\eta})t^{\alpha-r}
\end{align}
for every $t\in (0,T]$ and some positive constant $c_1=c_1(\alpha,\eta,r,\|x\|_{C^{\eta}([0,T])},|\psi|_{\alpha},\sigma,\mathfrak R)$, which is a continuous function of $\|x\|_{C^{\eta}([0,T])}$ and $\mathfrak R$. Now, we observe that
$|y(t)-y(s)|_r\le |\hat\delta_1y(s,t)|_r+|\mathfrak{a}(s,t)y(s)|_r$.
Since $\hat\delta_1y={\mathscr I}_{S\sigma(y)}$, from \eqref{stima-utile} it follows that
\begin{align}
|\hat\delta_1y(s,t)|_r\le c_2(t-s)^{\eta+\alpha-r},\qquad\;\,(s,t)\in [0,T]^2_{<},
\label{questo}
\end{align}
where $c_2=c_2(\alpha,\eta,r,\|x\|_{C^{\eta}([0,T])},\psi,|\sigma|_{\alpha},{\mathfrak R})$ is a positive constant, which depends in a continuous way on $\|x\|_{C^{\eta}([0,T])}$ and
${\mathfrak R}$.
Moreover, using condition \eqref{stime_smgr}(b) and estimate \eqref{reg_sol} (with $r$ being replaced by $r+\beta$), we get
\begin{align}
|\mathfrak{a}(s,t)y(s)|_r
\le & C_{r+\beta,r,T}|t-s|^{\beta}|y(s)|_{r+\beta}\notag \\
\le & C_{r+\beta,r,T}{\tilde c_1}(1+T^{\eta})s^{\alpha-r-\beta}|t-s|^{\beta},
\label{quello}
\end{align}
where $\beta>0$ is such that $r+\beta<1$ (such $\beta$ exists since we are assuming $r\in [\alpha,1)$). From these two last estimates it follows immediately that $y\in C((0,T];X_r)$.
Moreover, for every $\varepsilon\in(0,T]$ and $r\in[\alpha,1)$, there exists a positive constant $c_3=c_3(\alpha,\eta,r,\|x\|_{C^{\eta}([0,T])},|\psi|_{\alpha},\sigma,\mathfrak R,T)$, which depends in a continuous way on $\|x\|_{C^{\eta}([0,T])}$ and on $\mathfrak R$, such that
\begin{align*}
\|y\|_{C([\varepsilon,T];X_r)}+\|\hat\delta_1y\|_{\eta+\alpha-r|r,[\varepsilon,T]}\leq c_3\varepsilon^{\alpha-r}.
\end{align*}

Next, we estimate $|(\hat\delta_1 (\sigma(y)))(s,t)|_{{\lambda}}$ when $\eta+\alpha-\lambda>1$, i.e., $\lambda\in[0,\eta+\alpha-1)$. As usually, we separately estimate $|(\delta_1(\sigma(y)))(s,t)|_{\lambda}$ and $|\mathfrak{a}(s,t)\sigma(y(s))|_{\lambda}$. Note that $\lambda<\alpha$ since $\eta<1$. We fix $\varepsilon>0$ and observe that the continuous embedding $X_{\alpha}\hookrightarrow X_{\lambda}$, \eqref{stima-utile} and \eqref{reg_sol} (with $r=2\alpha-\lambda$, which belongs to $[\alpha,1)$ since $\alpha<1/2$) give
\begin{align}
|(\delta_1(\sigma(y)))(s,t)|_{{\lambda}}
\le &K_{\alpha,\lambda}L_\sigma^{\alpha,\mathfrak R}(|(\hat\delta_1 y)(s,t)|_{{\alpha}}+|\mathfrak{a}(s,t)y(s)|_{{\alpha}})\notag\\
\le & K_{\alpha,\lambda}L_\sigma^{\alpha,\mathfrak R}(\|y\|_{\alpha,\alpha,[0,T]}|t-s|+C_{2\alpha-\lambda,\alpha,T}\varepsilon^{\lambda-\alpha}|t-s|^{\alpha-\lambda})\notag\\
\le & c_4\varepsilon^{\lambda-\alpha}|t-s|^{\alpha-\lambda}
\label{stima_a1}
\end{align}
for every $(s,t)\in [\varepsilon,T]^2_{<}$, where $c_4=c_4(\alpha,\eta,\|x\|_{C^{\eta}([0,T])},\mathfrak R,T,\lambda)$. Moreover,
\begin{align}
\label{stima_b}
|\mathfrak{a}(s,t)\sigma(y(s))|_{\lambda}
\le & C_{\alpha,\lambda,T}|\sigma(y(s))|_\alpha|t-s|^{\alpha-\lambda}\notag\\
\le & C_{\alpha,\lambda,T}L_\sigma^{\alpha}(1+|y(s)|_\alpha)|t-s|^{\alpha-\lambda}\notag\\
\le & C_{\alpha,\lambda,T}L_\sigma^{\alpha}(1+{\mathfrak R})|t-s|^{\alpha-\lambda}
\end{align}
for every $(s,t)\in [\varepsilon,T]^2_{<}$.
From \eqref{stima_a1} and \eqref{stima_b}, it follows that
\begin{eqnarray*}
\sup_{\varepsilon\le s<t\le T}\frac{|(\hat\delta_1(\sigma(y)))(s,t)|_{{\lambda}}}{|t-s|^{\alpha-\lambda}}\le c_4\varepsilon^{\lambda-\alpha}+C_{\alpha,\lambda,T}L_{\sigma}^{\alpha}(1+{\mathfrak R}).
\end{eqnarray*}
Moreover, arguing as in the proof of \eqref{video} we can show that
\begin{align*}
&|(\hat\delta_1(\sigma(y))(t,s)-(\hat\delta_1(\sigma(y))(t_0,s_0)|_{\lambda}\\
\le & L^\alpha K_{\alpha,\lambda}(|y(t)-y(t_0)|_{\alpha}+ M_{\alpha,\alpha}|y(s)-y(s_0)|_{\alpha})+2C_{\alpha,\lambda}|\sigma(y(s_0))|_{\alpha}|t-t_0|^{\alpha-\lambda},
\end{align*}
for every $(t_0,s_0), (t,s)\in [\varepsilon,T]^2_{<}$, where $L^\alpha$ denotes the Lipschitz constant of $\sigma$ on the subset $\{y\in X_\alpha:|y|_\alpha\leq \sup_{t\in[0,T]}{|y(t)|_\alpha}\}$ of $X_\alpha$, and conclude that $\hat \delta_1(\sigma(y))\in C_{\alpha-\lambda}([\varepsilon,T]_<^2;X_\lambda)$.
Further, $\sigma(y)$ belongs to $C([\varepsilon,T];X_\alpha)$. From Lemma \ref{thm:young_integral} with $k=\alpha-\lambda$, $\beta=\lambda$, $\beta_1=\alpha$ and $r=\gamma$, we infer that
${\mathscr I}_{S\sigma(y)}$ belongs to $C_{\eta+\alpha-\lambda-\gamma}([\varepsilon,T]_<^2;X_{\gamma+\lambda})$
for every $\gamma\in [\alpha-\lambda,1)$ and
\begin{align}
&\|{\mathscr I}_{S\sigma(y)}\|_{\eta+\alpha-\lambda-\gamma|\gamma+\lambda,[\varepsilon,T]}\notag\\
\le & C_{\alpha,\eta,\gamma,\alpha-\lambda}\|x\|_\eta(\|\sigma(y)\|_{\alpha,[\varepsilon,T]}
+\|\hat\delta_1\sigma(y)\|_{\alpha-\lambda|\lambda,[\varepsilon,T]})\notag\\
\le & c_5\varepsilon^{\lambda-\alpha}
\label{stima-utile-3}
\end{align}
for some positive constant $c_5=c_5(\alpha,\eta,\sigma,\|x\|_{C^{\eta}([0,T])},\mathfrak R,T,\lambda,\gamma,|\psi|_{\alpha})$, which does not depend on $\varepsilon$.
From \eqref{scomp_soluzione}, with $\tau=\varepsilon$, we can write
\begin{equation}
y(t)=S(t-\varepsilon)(y(\varepsilon))+{\mathscr I}_{S\sigma(y)}(\varepsilon,t),\qquad\;\,t\in[\varepsilon,T]
\label{aaaa}
\end{equation}
and applying \eqref{reg_sol}, with $t=\varepsilon$ and $r=\alpha$, \eqref{stima-utile-3} and \eqref{aaaa} we infer that
\begin{align}
|y(t)|_{\gamma+\lambda}
\leq & M_{\alpha,\gamma+\lambda,T}(t-\varepsilon)^{\alpha-{\gamma-\lambda}}|y(\varepsilon)|_\alpha+|({\mathscr I}_{S\sigma(y)}(\varepsilon,t)|_{\gamma+\lambda}\notag  \\
 \leq & c_1M_{\alpha,\gamma+\lambda,T}(t-\varepsilon)^{\alpha-{\gamma-\lambda}}\notag  \\
 & +\|{\mathscr I}_{S\sigma(y)}\|_{\eta+\alpha-\lambda-\gamma|\lambda+\gamma,[\varepsilon,T]}(t-\varepsilon)^{\eta+\alpha-\lambda-\gamma} \notag \\
\leq & c_6(t-\varepsilon)^{\alpha-\gamma-\lambda}\varepsilon^{\lambda-\alpha},
\label{stima_mild_solution_reg}
\end{align}
for every $t\in(\varepsilon,T]$ and some positive constant $c_6=c_6(\lambda,\gamma,\eta,\alpha,\sigma,x,\psi,\mathfrak R, T)$. In particular,
since the range of the function $\varrho:D\to\R$, defined by $\varrho(\lambda,\gamma)=\lambda+\gamma$ for every $(\lambda,\gamma)\in D=\{
(\lambda,\gamma)\in\R^2: \lambda\in [0,\eta+\alpha-1), \gamma\in [\alpha-\lambda,1)\}$ is the interval $[\eta+\alpha-1,\eta+\alpha)$,
for every $\mu\in[0,\eta+\alpha-1)$ we can choose $\lambda$ and $\gamma$ such that $1+\mu=\lambda+\gamma$. Then, from \eqref{stima_mild_solution_reg} we conclude that
\begin{align*}
|y(t)|_{1+\mu}
\leq c_7(t-\varepsilon)^{\alpha-1-\mu}\varepsilon^{\lambda-\alpha}, \qquad\;\, t\in(\varepsilon,T],
\end{align*}
so that, for every $\varepsilon\in (0,T/2)$,
\begin{align}
\label{stima_epsilon_2}
|y(t)|_{1+\mu}\leq c_7\varepsilon^{\lambda-1-\mu}, \qquad\;\, t\in [2\varepsilon,T],
\end{align}
and $c_7=c_7(\lambda,\mu,\eta,\alpha,\sigma,x,\psi,\mathfrak R, T)$ is a positive constant, which depends in a continuous way on $\|x\|_{C^{\eta}([0,T])}$ and on $\mathfrak R$ but not on $\varepsilon$.
From \eqref{stima_epsilon_2}, estimate \eqref{stima_lambda_mu} follows at once. Finally, using again \eqref{aaaa} and the smoothness properties of the semigroup $(S(t))_{t\ge 0}$, we conclude that $y\in \hat C_{\eta+\alpha-\mu}([2\varepsilon,T];X_{\mu})$ for every $\mu\in [\eta+\alpha-1,\eta+\alpha)$ and $\varepsilon\in (0,T/2)$.
\end{proof}

\begin{remark}
\label{rmk:remark_teorema_principale}
{\rm \begin{enumerate}[\rm (i)]
\item
Theorem \ref{es_mild_sol_gub} generalizes the results in \cite[Theorem 4.3]{GT10}.
\item
From the last part of Step $3$ in the proof of Theorem \ref{es_mild_sol_gub} it follows that $y\in C((0,T];X_\mu)$ for any $\mu\in[0,\eta+\alpha)$.
\item
In Step 3 of the proof of Theorem \ref{es_mild_sol_gub} we have proved that for each $r\in[\alpha,1)$ there exists a constant $c$ such that
\begin{align}
\label{stima_sol_Xr}
|y(t)|_{r}\leq ct^{\alpha-r}, \qquad\;\, t\in(0,T],
\end{align}
for some constant $c$, independent of $t$.
If $\psi\in X_{\gamma}$ for some $\gamma\in [\alpha,1)$, then arguing as in estimate \eqref{reg_sol}, we can easily show that we can replace
$\alpha-r$ with $(\gamma-r)\wedge 0$ in \eqref{stima_sol_Xr}, with $r\in[\alpha,1)$. Based on this estimate, \eqref{questo} and \eqref{quello}, we conclude that
\begin{align}
|y(t)-y(s)|\le &|(\hat\delta_1y)(t,s)|_r+|\mathfrak{a}(s,y)y(s)|_r\notag\\
\le &c_{*}(t-s)^{\eta+\gamma-r}+c_{**}s^{(\gamma-r-\beta)\wedge 0}|t-s|^{\beta}
\label{psico}
\end{align}
for every $\beta>0$ such that $r+\beta<1$, every $0<s<t\le T$ and some positive constants $c_*$ and $c_{**}$, independent of $s$ and $t$. Since $\beta<\eta+\gamma-r$, from \eqref{psico} we conclude that
\begin{align*}
|y(t)-y(s)|_r\leq c s^{(\gamma-r-\beta)\wedge 0}|t-s|^{\beta}, \qquad\;\, 0<s<t\leq T.
\end{align*}
If $\gamma-r-\beta\ge 0$ then the above estimate can be extended to $s=0$.
We will use these estimates in Section \ref{sect-examples}.
\end{enumerate}
}
\end{remark}

\begin{remark}
\label{rmk:mild_sol_vett}
{\rm The result in Theorem \ref{es_mild_sol_gub} extend, using the same techniques, to the case of the Young equation
\begin{align}
\label{equazione_vettoriale}
\left\{
\begin{array}{ll}
dy(t)=Ay(t)dt+\displaystyle\sum_{i=1}^m\sigma_i(y(t))dx_i(t), &t\in(0,T],\\[1mm]
y(0)=\psi.
\end{array}
\right.
\end{align}
where the nonlinear terms $\sigma_i$ $(i=1,\ldots,m)$ satisfy Hypotheses \eqref{ip_nonlineare} and the paths $x_i\in C^\eta([0,T])$ $(i=1,\ldots,n)$ belong to
$C^{\eta}([0,T])$.}
\end{remark}

\section{The integral solution}
\label{sect-4}
To deal with integral solutions, we prove that the integral
\begin{align*}
\int_0^t\sigma(y(u))dx(u), \qquad\;\, 0\leq t\leq T,
\end{align*}
where $y$ is the unique mild solution to \eqref{omo_equation}, is well defined as Young integral, see Theorem \ref{thm:young_int}.

\begin{definition}
Let $y\in \hat C_\alpha([0,T];X_\alpha)\cap L^1((0,T);D(A))$ for some $\alpha\in (1-\eta,1)$. We say that $y$ is an integral solution to \eqref{omo_equation} if it satisfies the integral equation
\begin{align}
\label{int_solution}
y(t)=\psi+\int_0^tAy(u)du+\int_0^t\sigma(y(u))dx(u), \qquad\;\, t\in[0,T].
\end{align}
\end{definition}

\begin{remark}
{\rm If $\sigma$ satisfies Hypothesis $\ref{ip_nonlineare}$, then for every $f\in \hat C_\alpha([0,T];X_\alpha)$ and $x\in C^\eta([0,T])$, where $\eta\in (1/2,1)$ and $\alpha\in (1-\eta,1)$}, the Young integral
\begin{align}
\int_s^t\sigma(f(u))dx(u), \qquad\;\,(s,t)\in [0,T]^2_<,
\label{scuola}
\end{align}
is well defined. Indeed, arguing as in the proof of \eqref{stima_delta} it can be easily checked that
$\sigma(f)\in C^{\alpha}([0,T];X)$. Therefore, Theorem $\ref{thm:young_int}$ guarantees that the integral in \eqref{scuola} is well defined.
\end{remark}

The main result of this section shows that, under Hypotheses \ref{hyp-main}, \ref{ip:young_path} and \ref{ip_nonlineare}, the mild solution $y$ to \eqref{omo_equation} is also an integral solution.
To prove such a result, we first prove that the mild solution to \eqref{omo_equation} can be approximated by mild solutions of classical problems.

\begin{proposition}
\label{conv_sol_appr}
Let $(x_n)\subset C^1([0,T])$ be a sequence converging to $x$ in $C^{\eta}([0,T])$ for some $\eta>1/2$ and fix $\psi\in X_{\alpha}$ for some $\alpha\in (0,1/2)$ such that $\alpha+\eta>1$. For every $n\in\N$, denote by $y_n$ the mild solution to \eqref{omo_equation} with $x$ replaced by $x_n$, and let $y$ be the mild solution to \eqref{omo_equation}. Then, the following properties are satisfied:
\begin{enumerate}[\rm (i)]
\item
$y_n$ converges to $y$ in $\hat C^{\alpha}([0,T];X_{\alpha})$ as $n$ tends to $+\infty$;
\item
if we set $\displaystyle\mathbb J(t)=\int_0^t \sigma(y(u))dx(u)$ and $\mathbb J_n(t)=\displaystyle\int_0^t \sigma(y_n(u))dx_n(u)$ for every $t\in[0,T]$ and $n\in\N$,
then $\mathbb J_n$ converges to $\mathbb J$ in $C^{\eta}([0,T];X)$ as $n$ tends to $+\infty$.
\end{enumerate}
\end{proposition}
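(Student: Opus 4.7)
The plan is to adapt the contraction/iteration argument from Step 2 of the proof of Theorem \ref{es_mild_sol_gub}, now tracking the explicit dependence of all constants on the driving path.

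\textbf{Step 1 (Uniform a priori bounds).} Since $x_n\to x$ in $C^\eta([0,T])$, there exists $K>0$ with $\|x_n\|_{C^\eta([0,T])}\le K$ for every $n$. Inspecting the proof of \eqref{aprioir_est_sol}, the constant $\mathfrak R$ depends continuously on $\|x\|_{C^\eta([0,T])}$, so the same bound applied with $K$ in place of $\|x\|_{C^\eta([0,T])}$ gives a single constant $\tilde{\mathfrak R}$, independent of $n$, bounding both $\|y_n\|_{\alpha,\alpha,[0,T]}$ and $\|y\|_{\alpha,\alpha,[0,T]}$.

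\textbf{Step 2 (Proof of (i)).} Using linearity of the Young integral in the integrand \eqref{young_integral_linearity_1} and in the integrator \eqref{young_integral_linearity}, together with Corollary \ref{integrale_ancorato}, I split for every $(s,t)\in[0,T]^2_<$,
\begin{align*}
\hat\delta_1(y_n-y)(s,t)=\int_s^t S(t-r)[\sigma(y_n(r))-\sigma(y(r))]\,dx_n(r)+\int_s^t S(t-r)\sigma(y(r))\,d(x_n-x)(r).
\end{align*}
For the first integral I repeat the bounds \eqref{approx_stima_1_1}--\eqref{approx_stima_2_2} with $y_1=y_n$, $y_2=y$ and $R=\tilde{\mathfrak R}$, and invoke Lemma \ref{thm:young_integral} (with driving path $x_n$) to get a bound of the form $C_1 K(b-a)^{\eta-\alpha}\|y_n-y\|_{\alpha,\alpha,[a,b]}$ on its $\|\cdot\|_{\alpha|\alpha,[a,b]}$ norm, where $C_1$ depends only on $\sigma$, $\tilde{\mathfrak R}$ and $T$. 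For the second integral, the a priori bounds on $y$ together with \eqref{stima_sigma_2}--\eqref{stima_sigma_1} yield that $\sigma(y)\in \hat C_\alpha([0,T];X)\cap C([0,T];X_\alpha)$ with norms bounded by some $C(\sigma,\tilde{\mathfrak R})$, and applying Lemma \ref{thm:young_integral} to the driving path $x_n-x$ gives a bound of the form $C_2\|x_n-x\|_{C^\eta([0,T])}$. Combining these with a version of \eqref{duepuntini} on an interval $[a,a+\delta]$ where $\delta$ is chosen so that $C_1K\delta^{\eta-\alpha}\le 1/2$, one obtains
\begin{align*}
\|y_n-y\|_{\alpha,\alpha,[a,a+\delta]}\le 2M_{\alpha,\alpha,T}|y_n(a)-y(a)|_\alpha+2C_2\|x_n-x\|_{C^\eta([0,T])}.
\end{align*}
Iterating this bound along a fixed partition $0=t_0<t_1<\ldots<t_N=T$ with mesh at most $\delta$, and starting from $y_n(0)-y(0)=0$, I obtain $\|y_n-y\|_{\alpha,\alpha,[0,T]}\to 0$, which is (i).

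\textbf{Step 3 (Proof of (ii)).} By linearity and Remark \ref{rem-davide},
\begin{align*}
\mathbb J_n(t)-\mathbb J(t)=\int_0^t[\sigma(y_n(u))-\sigma(y(u))]\,dx_n(u)+\int_0^t\sigma(y(u))\,d(x_n-x)(u),
\end{align*}
and both sides vanish at $t=0$, so it suffices to bound the $\eta$-Hölder seminorm. The embedding of $\hat C_\alpha([0,T];X_\alpha)$ into $C^\alpha([0,T];X)$ (the first item of the Remark following the definition of $\hat C_\beta$, with $\gamma=0$) combined with the Lipschitz continuity of $\sigma$ gives $\|\sigma(y_n)-\sigma(y)\|_{C^\alpha([0,T];X)}\to 0$ from (i). For each of the two terms, Theorem \ref{thm:young_int} bounds its $\eta$-Hölder seminorm by the product of a $C^\alpha([0,T];X)$ norm of the integrand and a $C^\eta$ norm of the integrator: the first term then vanishes in the limit by the $C^\alpha$-convergence just obtained together with the uniform bound $\|x_n\|_{C^\eta}\le K$, and the second by $\|x_n-x\|_{C^\eta([0,T])}\to 0$.

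The \textbf{main obstacle} is the bookkeeping in Step 2: one must verify that the constant $C_1$ produced by Lemma \ref{thm:young_integral} and by the Lipschitz estimates on $\sigma$ and $\sigma'$ can be chosen independently of $n$ and of the subinterval $[a,b]\subseteq[0,T]$, so that $\delta$ can be fixed once and for all and the iteration over the finite partition closes. This reduces to the continuous dependence of $\tilde{\mathfrak R}$ on $\|x\|_{C^\eta}$ already exploited above.
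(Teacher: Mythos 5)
Your proposal is correct and follows essentially the same route as the paper: a uniform bound on $\|y_n\|_{\alpha,\alpha,[0,T]}$ from the continuous dependence of $\mathfrak R$ on $\|x\|_{C^{\eta}([0,T])}$, the splitting of $y_n-y$ (and of $\mathbb J_n-\mathbb J$) into a Young integral with the difference of the integrands and one with the difference of the drivers (yours is just the symmetric attribution of which factor carries the difference), a smallness-of-interval contraction estimate via Lemma \ref{thm:young_integral} together with \eqref{approx_stima_1_1}--\eqref{approx_stima_2_2}, and a finite iteration in time; part (ii) is likewise handled with \eqref{stima_int_young_nosemi} exactly as in the paper. The only minor imprecision is in your Step 3: the convergence of the $C^{\alpha}$-seminorm of $\sigma(y_n)-\sigma(y)$ does not follow from the Lipschitz continuity of $\sigma$ alone, but from the estimate \eqref{approx_stima_2_1} (which uses the local Lipschitz continuity of $\sigma'$), the very bound you already invoked in Step 2.
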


\begin{proof}
(i) We split the proof into two steps. In the first one, we show the assertion when $T$ is small enough and in the second step we remove this additional condition.

{\em Step 1}. Let us fix $\tau,\widetilde T\in[0,T]$ with $\tau<\widetilde T$. To begin with we observe that, applying Lemma \ref{thm:young_integral}, with $k=r=\beta_1=\alpha$, $\beta=0$ and $a=\tau,b=\widetilde T$ and noticing that, by Corollary \ref{integrale_ancorato} with $a=\tau$ and $b=\widetilde T$,
$(\hat\delta_1{\mathscr I}_{Sf}(\tau,\cdot))(s,t)={\mathscr I}_{Sf}(s,t)$ for every $(s,t)\in [\tau,\widetilde T]$, we can show that
\begin{align}
\|{\mathscr I}_{Sf}(\tau,\cdot)\|_{\alpha,\alpha,[\tau,\widetilde T]}
\leq C(\widetilde T-\tau)^{\eta-\alpha} (\|f\|_{\alpha, [\tau,\widetilde T]}+\|\hat\delta_1f\|_{\alpha|0,[\tau,\widetilde T]})\|x\|_{C^{\eta}([0,T])},
\label{suzanne}
\end{align}
for every $x\in C^{\eta}([0,T])$ and $f\in \hat C_{\alpha}([0,T];X)\cap C([0,T];X_{\alpha})$ such that $\alpha\in (0,1/2)$ and $\eta+\alpha>1$, and $\tau, \widetilde T\in [0,T]$, with $\tau<\widetilde T$, where $C=C_{\alpha,\eta,\alpha,\alpha}$ is the constant in Lemma \ref{thm:young_integral}.

Now, we fix ${T_*}\in(0,T]$ to be chosen later on.  From \eqref{young_integral_linearity} and \eqref{young_integral_linearity_1} we get
\begin{align*}
y(t)-y_n(t)
= & \int_0^tS(t-r)\sigma(y(r))dx(r)-\int_0^tS(t-r)\sigma(y_n(r))dx_n(r) \notag \\
= & \int_0^tS(t-r)\sigma(y_n(r))d\overline x_n(r)+\int_0^tS(t-r)(\sigma(y(r))-\sigma(y_n(r)))dx(r)\notag\\
=&\!: \mathbb I_{1,n}(t)+\mathbb I_{2,n}(t)
\end{align*}
for every $t\in[0,{T_*}]$,  where $\overline x_n:=x-x_n$. Taking \eqref{stima_sigma_1} and \eqref{stima_sigma_2} into account, we can estimate
\begin{align}
\|\mathbb I_{1,n}\|_{\alpha,\alpha,[0,{T_*}]}\leq CT_*^{\eta-\alpha}(\|\sigma(y_n)\|_{\alpha,[0,{T_*}]}+\|\hat\delta_1 \sigma(y_n)\|_{\alpha|0,[0,{T_*}]})\|\overline x_n\|_{C^{\eta}([0,T])}\notag\\
\le  CT_*^{\eta-\alpha}(L_\sigma+L_\sigma^\alpha)(C_{\alpha,0,T}+2)(1+\|y_n\|_{\alpha,\alpha,[0,{T_*}]})\|\overline x_n\|_{C^{\eta}([0,T])},
\label{parete}
\end{align}
where $C$ is a positive constant which depends on $\alpha$, $\eta$, $\sigma$ and $T$. An inspection of the proof of estimate \eqref{aprioir_est_sol} shows that the constant $\mathfrak{R}$ depends in a continuous way on the $\eta$-H\"older norm of the path.
Since $\sup_{n\in\N}\|x_n\|_{C^{\eta}([0,T])}<+\infty$,  from \eqref{parete} we can infer that
\begin{align*}
\|\mathbb I_{1,n}\|_{\alpha,\alpha,[0,{T_*}]}\leq cT_*^{\eta-\alpha}(L_\sigma+L_\sigma^\alpha)(C_{\alpha,0,T}+2)(1+\mathfrak M)\|\overline x_n\|_{C^{\eta}([0,T])},
\end{align*}
for some positive constant $\mathfrak M$, independent of $n$. As far as ${\mathbb I}_{2,n}$ is considered, from \eqref{suzanne}, with $f$ replaced by $\sigma(y)-\sigma(y_n)$, and estimates
\eqref{approx_stima_2_1}, \eqref{approx_stima_2_2}, we infer that
\begin{align*}
&\|\mathbb I_{2,n}\|_{\alpha,\alpha,[0,{T_*}]}\notag\\
\leq &cT_*^{\eta-\alpha}
(\|\sigma(y)-\sigma(y_n)\|_{\alpha,[0,{T_*}]}
+\|\hat\delta_1(\sigma(y)- \sigma(y_n))\|_{\alpha|0,[0,{T_*}]})\|x\|_{C^{\eta}([0,T])}\notag\\
\le & \widetilde cT_*^{\eta-\alpha}\|y-y_n\|_{\alpha,\alpha,[0,{T_*}]}\|x\|_{C^{\eta}([0,T])},
\end{align*}
and $\widetilde c$ is a positive constant which depends on $\alpha$, $T$, $\sigma$, $\mathfrak M$, $K$, $\eta$ and on the constant $C_{\alpha,0,T}$. We choose ${T_*}\leq T$ such that $\widetilde cT_*^{\eta-\alpha}\|x\|_{C^{\eta}([0,T])}\leq 1/2$ and use the previous estimate to conclude that
\begin{align*}
\|y-y_n\|_{\alpha,\alpha,[0,{T_*}]}
\leq 2 cT_*^{\eta-\alpha}(L_\sigma+L_\sigma^\alpha)(C_{\alpha,0,T}+2)(1+\mathfrak M)\|x_n-x\|_{C^{\eta}([0,T])}
\end{align*}
and, consequently, that $y_n$ converges to $y$ in $\hat C_{\alpha}([0,T_*];X_{\alpha})$ as $n$ tends to $+\infty$.

{\em Step 2}. If $T_*= T$ then we are done. Otherwise, let us fix $\widehat T:=(2T_*)\wedge T$. For every $t\in[ T_*,\widehat T]$, from \eqref{scomp_soluzione} we can write
\begin{align*}
y(t)-y_n(t)
= & \int_{{T_*}}^tS(t-r)\left(\sigma(y(r))-\sigma(y_n(r))\right)dx(r)
+\int_{{T_*}}^tS(t-r)\sigma(y_n(r))d\overline x_n (r) \\
& + S(t-{T_*})(y({T_*})-y_n({T_*})).
\end{align*}
In Step $1$ we have proved that $y_n({T_*})$ converges to $y({T_*})$ in $X_\alpha$ as $n$ tends to $+\infty$. Moreover, for every $(s,t)\in [T_*,T]^2_{<}$ it holds that
$\widehat\delta_1S(\cdot-{T_*})(y({T_*})-y_n({T_*}))(s,t)=0$.
Hence,
$\|S(\cdot-{T_*})(y({T_*})-y_n({T_*}))\|_{\alpha,\alpha,[{T_*},\widehat T]}$ vanishes as $n$ tends to $+\infty$.
Repeating the same arguments as in Step 1, we conclude that
\begin{align*}
\|y-y_n\|_{\alpha,\alpha,[{T_*},\widehat T]}
\leq 2cT_*^{\eta-\alpha}(L_\sigma+L_\sigma^\alpha)(C_{\alpha,0,T}+2)(1+\mathfrak M)\|x_n-x\|_{C^{\eta}([0,T])},
\end{align*}
and therefore $y_n$ converges to $y$ in $\hat C_{\alpha}([{T_*},\widehat T];X_{\alpha})$ as $n$ tends to $+\infty$.
If $\widehat T=T$ then the assertion follows. Otherwise by iterating this argument, we get the assertion in a finite number of steps.
\medskip

(ii) As in the proof of property (i),  we can write
\begin{align*}
\mathbb J_n(t)-\mathbb J(t)
= & \int_0^t\sigma(y_n(u))d\overline x_n(u)+\int_0^t(\sigma(y_n(u))-\sigma(y(u)))dx(u) \\
=&\!:\mathbb J_1^n(0,t)+\mathbb J_2^n(0,t).
\end{align*}
From \eqref{stima_int_young_nosemi}, \eqref{stima_sigma_1} and \eqref{stima_sigma_2}, we infer
\begin{align}
|\mathbb J_1^n(0,t)|_0
\leq & t^{\eta}\bigg (\|\sigma(y_n)\|_{0,[0,T]}+\frac{\|\sigma(y_n)\|_{\alpha,\alpha,[0,T]}}{1-2^{-(\eta+\alpha-1)}}\bigg )\|\overline x_n\|_{C^{\eta}([0,T])}\notag \\
\leq & T^{\eta}\bigg (L_{\sigma}+\frac{(L_\sigma+L_{\sigma}^{\alpha})(1+C_{\alpha,0,T})}{1-2^{-(\eta+\alpha-1)}}\bigg )\bigg (1+\sup_{n\in\N}\|y_n\|_{\alpha,\alpha,[0,T]}\bigg )\|\overline x_n\|_{C^{\eta}([0,T])}
\label{stima_Jsigma1}
\end{align}
for every $t\in[0,T]$,
As far as the term $\mathbb J_2^n(0,t)$ is concerned, we argue similarly, taking advantage
of the computations in \eqref{A} and estimate \eqref{approx_stima_2_1}, and get
\begin{align}
|\mathbb J_2^n(0,t)|\le CT^{\eta}\|y-y_n\|_{\alpha|\alpha,[0,T]}.
\label{stima_Jsigma2}
\end{align}
From \eqref{stima_Jsigma1} and \eqref{stima_Jsigma2} it thus follows that
\begin{eqnarray*}
\sup_{t\in [0,T]}|\mathbb J_n(t)-\mathbb J(t)|\le C'T^{\eta}(\|x-x_n\|_{C^{\eta}([0,T])}+\|y-y_n\|_{\alpha|\alpha,[0,T]})
\end{eqnarray*}
for a suitable constant $C'$, independent of $n$.
From the assumptions on $x$ and $(x_n)$, and property (i), we conclude that $\mathbb J_n$ converges to $\mathbb J$ in $C([0,T];X)$ as $n$ tends to $+\infty$.

To prove that ${\mathbb J}_n$ converges to  ${\mathbb J}$ in $C^{\eta}([0,T];X)$, now it suffices to note that (see Remark \ref{rem-davide})
\begin{align*}
(\delta_1(\mathbb J_n-\mathbb J))(s,t)={\mathbb J}_1^n(s,t)+{\mathbb J}_2^n(s,t), \qquad\;\,(s,t)\in [0,T]^2_{<}.
\end{align*}
and repeat the above computations to infer that
\begin{eqnarray*}
[\mathbb J_n-\mathbb J]_{C^{\eta}([0,T])}\le C'(\|x-x_n\|_{C^{\eta}([0,T])}+\|y-y_n\|_{\alpha|\alpha,[0,T]})
\end{eqnarray*}
for every $n\in\N$.
\end{proof}

We are now ready to show that the mild solution $y$ to \eqref{omo_equation} is an integral solution.
\begin{theorem}\label{th-soluzione-classica}
Let Hypotheses $\ref{hyp-main}$, $\ref{ip:young_path}$ and $\ref{ip_nonlineare}$ be satisfied and let $\psi\in X_{\alpha}$ for some $\alpha\in (0,1)$ such that $\alpha+\eta>1$.
Further, let $y$ be the unique mild solution to equation \eqref{omo_equation}. Then, $y$ satisfies \eqref{int_solution}.
\end{theorem}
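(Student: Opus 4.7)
The strategy is approximation by smooth paths followed by passage to the limit, using Proposition \ref{conv_sol_appr} together with the uniform blow-up estimates supplied by Theorem \ref{es_mild_sol_gub}.

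First, I would pick a sequence $(x_n)\subset C^1([0,T])$ with $x_n\to x$ in $C^\eta([0,T])$ (for instance by mollification) and let $y_n$ be the mild solution to \eqref{omo_equation} with $x_n$ in place of $x$. By Proposition \ref{conv_sol_appr}(i), $y_n\to y$ in $\hat C_\alpha([0,T];X_\alpha)$, and by Proposition \ref{conv_sol_appr}(ii) the corresponding Young integrals converge uniformly in time with values in $X$.

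Second, for each $n$ I would establish the integral identity
$$y_n(t)=\psi+\int_0^t Ay_n(s)\,ds+\int_0^t \sigma(y_n(s))\,dx_n(s),\qquad t\in[0,T].$$
Since $x_n\in C^1$, the Young integral in the mild formula for $y_n$ coincides with an ordinary Bochner integral against $x_n'$. Combining Theorem \ref{es_mild_sol_gub} (which gives $y_n(s)\in D(A)$ for $s>0$, with $|y_n(s)|_{1+\mu}\le C_n s^{\eta+\alpha-2-\mu-\varepsilon}$) with standard semigroup theory, $y_n$ is a classical solution on $(\tau,T]$ for every $\tau\in(0,T)$; integrating the ODE $\dot y_n=Ay_n+\sigma(y_n)x_n'$ on $[\tau,t]$ yields $y_n(t)-y_n(\tau)=\int_\tau^t Ay_n(s)\,ds+\int_\tau^t \sigma(y_n(s))\,dx_n(s)$. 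Letting $\tau\to 0^+$ gives the claimed identity: $y_n(\tau)\to\psi$ in $X$ by the strong continuity of $S$ and the mild formula, the Young integral $\int_\tau^t \sigma(y_n)\,dx_n$ is continuous in its lower endpoint by Remark \ref{rem-davide}, and $\int_\tau^t Ay_n(s)\,ds\to \int_0^t Ay_n(s)\,ds$ by dominated convergence, because $|Ay_n(s)|\le Cs^{\eta+\alpha-2-\varepsilon}$ is integrable on $[0,t]$ (choose $\varepsilon$ so small that $\eta+\alpha-1-\varepsilon>0$, possible thanks to $\alpha+\eta>1$).

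Third, I would pass to the limit $n\to+\infty$ in this identity. The left-hand side converges by Proposition \ref{conv_sol_appr}(i); the Young term converges to $\int_0^t \sigma(y(s))\,dx(s)$ by Proposition \ref{conv_sol_appr}(ii). The crucial convergence $\int_0^t Ay_n(s)\,ds\to \int_0^t Ay(s)\,ds$ rests on dominated convergence. A uniform dominating function is obtained from an inspection of Step~3 in the proof of Theorem \ref{es_mild_sol_gub}: the constants there depend continuously on $\|x\|_{C^\eta([0,T])}$ and on $\mathfrak R$, both of which are uniformly bounded along $(x_n)$, so $|Ay_n(s)|\le Ks^{\eta+\alpha-2-\varepsilon}$ with $K$ independent of $n$. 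For pointwise convergence $Ay_n(s)\to Ay(s)$ at every $s>0$, I would use, for a fixed $\tau\in(0,s)$, the decomposition
$$y_n(s)=S(s-\tau)y_n(\tau)+\mathscr I_{S\sigma(y_n)}(\tau,s)$$
coming from Lemma \ref{lem:div_int} and apply $A$: the first term converges in $X$ since $AS(s-\tau)\in\mathscr L(X_\alpha,X)$ and $y_n(\tau)\to y(\tau)$ in $X_\alpha$, while for the second one an argument parallel to Proposition \ref{conv_sol_appr}(ii), combined with Lemma \ref{thm:young_integral} chosen so that the output lies in $X_1$, provides the required convergence.

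The main obstacle is precisely this last step: justifying $\int_0^t Ay_n(s)\,ds\to\int_0^t Ay(s)\,ds$. It requires (a) a genuinely uniform-in-$n$ pointwise bound on $|Ay_n(s)|$ near $s=0$, which forces one to track the continuous dependence of the constants in Theorem \ref{es_mild_sol_gub} on $\|x\|_{C^\eta([0,T])}$; and (b) pointwise convergence of $Ay_n(s)$ to $Ay(s)$ for $s>0$, which amounts to upgrading Proposition \ref{conv_sol_appr} to convergence in a stronger spatial norm on intervals $[\tau,T]$ bounded away from $0$. Both ingredients are available from estimates already present in Section \ref{sect-3}, so the plan reduces the theorem to a careful bookkeeping of those constants.
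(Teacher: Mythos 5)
Your first two steps match the paper: approximate $x$ by smooth paths $x_n$, note that the constants in Step~3 of Theorem \ref{es_mild_sol_gub} depend continuously on $\|x\|_{C^\eta([0,T])}$ and on $\mathfrak R$, so that $|Ay_n(s)|_0\le c\,s^{\lambda-1}$ with $c$ independent of $n$ and $\lambda\in(0,\eta+\alpha-1)$, and derive the integral identity for each $y_n$ (the paper gets it from \cite[Proposition 4.1.5]{LU95}, you get it by integrating the classical ODE on $[\tau,t]$ and letting $\tau\to0^+$; both are fine). The divergence, and the gap, is in your third step. You reduce the theorem to the pointwise convergence $Ay_n(s)\to Ay(s)$ for $s>0$, and claim this follows from ``estimates already present in Section \ref{sect-3}''. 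It does not: to make $A\mathscr I_{S\sigma(y_n)}(\tau,s)\to A\mathscr I_{S\sigma(y)}(\tau,s)$ you would need, via Lemma \ref{thm:young_integral} with output in $X_1$, that $\|\hat\delta_1(\sigma(y)-\sigma(y_n))\|_{k|\beta,[\tau,T]}\to0$ for some $\beta>0$ (e.g.\ $\beta=\lambda$, $k=\alpha-\lambda$ as in Step~3 of Theorem \ref{es_mild_sol_gub}), together with $\|\sigma(y)-\sigma(y_n)\|_{\beta_1,[\tau,T]}\to0$ for a suitable $\beta_1$. But the difference estimates of Section \ref{sect-3} (formulas \eqref{A}--\eqref{approx_stima_2_1}) are carried out only in the $X_0$-norm, using the Fr\'echet differentiability and Lipschitz continuity of $\sigma$ and $\sigma'$ on $X$; Hypothesis \ref{ip_nonlineare} gives no differentiability of $\sigma$ as a map on $X_\alpha$ and no Lipschitz structure with values in $X_\lambda$ for $\lambda>0$, and the abstract scale $(X_\alpha)$ of Hypothesis \ref{hyp-main} carries no interpolation inequality that would let you bridge the $X_0$-smallness and the $X_\alpha$-boundedness. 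So upgrading Proposition \ref{conv_sol_appr} to convergence in $X_1$ on $[\tau,T]$ is not ``bookkeeping of constants''; it would require new hypotheses or a genuinely new argument.

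The paper sidesteps this entirely, and you could too: since $\int_0^t Ay_n(s)\,ds=A\int_0^t y_n(s)\,ds=y_n(t)-\psi-\int_0^t\sigma(y_n(s))\,dx_n(s)$, Proposition \ref{conv_sol_appr} already gives convergence of the right-hand side and of $\int_0^t y_n(s)\,ds$; the closedness of $A$ then yields $\int_0^t y(s)\,ds\in D(A)$ and $A\int_0^t y(s)\,ds=y(t)-\psi-\int_0^t\sigma(y(s))\,dx(s)$. Finally, estimate \eqref{stima_lambda_mu} with $\mu=0$ gives $Ay\in L^1(0,T;X)$, so $A\int_0^t y(s)\,ds=\int_0^t Ay(s)\,ds$, which is the assertion. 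No pointwise convergence of $Ay_n$, and no uniform-in-$n$ domination of $Ay_n$ beyond what is needed to write the identity for $y_n$, is required.
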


\begin{proof}
Let $(x_n)\subset C^1([0,T])$ be a sequence of smooth paths which converges to $x$ in $C^{\eta}([0,T])$ as $n$ tends to $+\infty$. For every $n\in\N$, let $y_n$ be the unique mild solution to \eqref{omo_equation} with $x$ replaced by $x_n$.
The computations in Step 3 of the proof of Theorem \ref{es_mild_sol_gub} with $x$ replaced by $x_n$, the fact that $\sup_{n\in\N}\|x_n\|_{\eta}<+\infty$ imply that $y_n(t)$ belongs to $D(A)$ for each $t\in(0,T]$ and $n\in\N$, and for every $\lambda\in [0,\eta+\alpha-1)$ there exists a positive constant $c=c(\lambda)$, independent of $n$, such that
$|Ay(t)|_0\leq ct^{\lambda-1}$ and $|Ay_n(t)|_0\leq ct^{\lambda-1}$ for every $t\in(0,T]$ and $n\in\N$.
From \cite[Proposition 4.1.5]{LU95} we infer that
\begin{align*}
y_n(t)=\psi+\int_0^t A y_n(s)ds+\int_0^t\sigma(y_n(s))dx_n(s), \qquad t\in[0,T], \;\, n\in\N.
\end{align*}
Let us fix $t\in(0,T]$. From Proposition \ref{conv_sol_appr} we know that $y_n$ converges to $y$ in $C([0,T];X)$
and $\displaystyle\int_0^t\sigma(y_n(s))dx_n(s)$ converges to $\displaystyle\int_0^t\sigma(y(s))dx(s)$ in $X$ as $n$ tends to $+\infty$.
Hence,
$\displaystyle\int_0^ty_n(s)ds$ and
\begin{align*}
A\int_0^ty_n(s)ds
= & \int_0^tAy_n(s)ds=y_n(t)-\psi-\int_0^t\sigma(y_n(s))dx_n(s)
\end{align*}
converge, as $n$ tends to $+\infty$, to $\displaystyle\int_0^ty(s)ds$ and
$\displaystyle y(t)-\psi-\int_0^t\sigma(y(s))dx(s)$,
respectively, for every $t\in [0,T]$.
Since $A$ is a closed operator it follows that
\begin{align*}
\int_0^ty(s)ds\in D(A),\qquad\;\, A\int_0^ty(s)ds=y(t)-\psi-\int_0^t\sigma(y(s))dx(s).
\end{align*}
Finally, since $|Ay(t)|\le ct^{\lambda-1}$ for every $t\in (0,T]$ (see \eqref{stima_lambda_mu} with $\mu=0$), it follows that $Ay$ belongs to $L^1(0,T;X)$. Hence,
$\displaystyle A\int_0^ty(s)ds=\int_0^tAy(s)ds$,
which gives
\begin{align*}
y(t)=\psi+\int_0^tAy(s)ds+\int_0^t\sigma(y(s))dx(s).
\end{align*}
The arbitrariness of $t\in[0,T]$ yields the assertion.
\end{proof}

\begin{corollary}
\label{coro:int_sol_vett}
Let $\sigma_i:X\rightarrow X$ $(i=1,\ldots,m)$ satisfy Hypotheses \eqref{ip_nonlineare} and let the paths $x_i\in C^\eta([0,T])$ $(i=1,\ldots,n)$ belong to $C^{\eta}([0,T])$. Then, the unique mild solution $y$ to \eqref{equazione_vettoriale} with $\psi\in X_\alpha$, with $\alpha+\eta>1$, satisfy
\begin{align}
\label{int_solution_vett}
y(t)=\psi+\int_0^tAy(u)du+\sum_{i=1}^m\int_0^t\sigma_i(y(u))dx_i(u), \qquad\;\, t\in[0,T].
\end{align}
\end{corollary}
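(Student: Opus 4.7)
The plan is to mimic the proof of Theorem \ref{th-soluzione-classica} with the simple modification that everywhere a single Young integral driven by $x$ appears we now have a finite sum of Young integrals driven by the $x_i$'s. First, by Remark \ref{rmk:mild_sol_vett}, the mild solution $y$ to \eqref{equazione_vettoriale} exists, is unique in $\hat C_\alpha([0,T];X_\alpha)$, and inherits from Theorem \ref{es_mild_sol_gub} the classical regularity $y(t)\in D(A)$ for $t\in(0,T]$ together with the blow-up estimate $|Ay(t)|_0\le ct^{\lambda-1}$ for some $\lambda\in (0,\eta+\alpha-1)$, in particular $Ay\in L^1(0,T;X)$.

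Next, for every $i=1,\ldots,m$ I would choose a sequence $(x_i^n)_{n\in\N}\subset C^1([0,T])$ converging to $x_i$ in $C^\eta([0,T])$, and denote by $y_n$ the mild solution to \eqref{equazione_vettoriale} with each $x_i$ replaced by $x_i^n$. Since the $x_i^n$ are $C^1$, each Young integral reduces to a classical Riemann--Stieltjes integral and the argument based on \cite[Proposition 4.1.5]{LU95} that was used in the scalar proof applies verbatim to give
\begin{align*}
y_n(t)=\psi+\int_0^tAy_n(s)ds+\sum_{i=1}^m\int_0^t\sigma_i(y_n(s))dx_i^n(s),\qquad t\in[0,T],\;n\in\N.
\end{align*}

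The main task is then to extend Proposition \ref{conv_sol_appr} to the vectorial setting. Using the linearity of the Young integral in the driver \eqref{young_integral_linearity} and in the integrand \eqref{young_integral_linearity_1}, for each $t\in[0,T]$ I would write the difference $y(t)-y_n(t)$ as the sum over $i=1,\ldots,m$ of the two contributions already estimated in the scalar proof, the first involving $\sigma_i(y_n)$ integrated against $x_i-x_i^n$ and the second involving $\sigma_i(y)-\sigma_i(y_n)$ integrated against $x_i$. The bounds on $\mathbb I_{1,n}$ and $\mathbb I_{2,n}$ from Proposition \ref{conv_sol_appr} transfer term by term, using that the vectorial apriori bound $\mathfrak R$ depends continuously on $\max_i\|x_i\|_{C^\eta([0,T])}$ and is therefore uniform in $n$. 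The same $T_*^{\eta-\alpha}$ smallness trick, followed by iteration on a finite partition of $[0,T]$, yields convergence of $y_n$ to $y$ in $\hat C_\alpha([0,T];X_\alpha)$. The analogous argument, based on \eqref{stima_int_young_nosemi}, \eqref{stima_sigma_1} and \eqref{stima_sigma_2}, shows that $\int_0^\cdot\sigma_i(y_n(s))dx_i^n(s)$ converges to $\int_0^\cdot\sigma_i(y(s))dx_i(s)$ in $C([0,T];X)$ for every $i$.

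Finally, I would conclude exactly as in Theorem \ref{th-soluzione-classica}. Fix $t\in[0,T]$. Since $y_n(t)\to y(t)$ in $X$ and each $\int_0^t\sigma_i(y_n)dx_i^n\to\int_0^t\sigma_i(y)dx_i$ in $X$, the identity satisfied by $y_n$ implies that $A\int_0^t y_n(s)ds=\int_0^t Ay_n(s)ds$ converges in $X$ to $y(t)-\psi-\sum_{i=1}^m\int_0^t\sigma_i(y(s))dx_i(s)$, while $\int_0^t y_n(s)ds\to\int_0^t y(s)ds$ in $X$. The closedness of $A$ then yields $\int_0^t y(s)ds\in D(A)$ together with $A\int_0^t y(s)ds=y(t)-\psi-\sum_{i=1}^m\int_0^t\sigma_i(y(s))dx_i(s)$, and since $Ay\in L^1(0,T;X)$ we may swap $A$ and the integral to obtain \eqref{int_solution_vett}. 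The only mild obstacle is verifying that the apriori constant $\mathfrak R$ and the contraction time $T_*$ in the vectorial setting can be chosen uniformly in $n$; this is ensured by the continuous dependence on the H\"older norms of the drivers, which are uniformly bounded along the approximating sequences.
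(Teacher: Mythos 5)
Your proposal is correct and follows essentially the same route as the paper, which proves the corollary by invoking Remark \ref{rmk:mild_sol_vett} for existence and then repeating the approximation argument of Section \ref{sect-4} (the vectorial analogue of Proposition \ref{conv_sol_appr} and the closedness-of-$A$ argument of Theorem \ref{th-soluzione-classica}) term by term over the finitely many drivers $x_i$. Your additional remarks on the uniformity in $n$ of the a priori constant $\mathfrak R$ and of $T_*$ are exactly the points the paper's scalar proof already addresses, so nothing essential is missing.
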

\begin{proof}
The statement follows from Remark \ref{rmk:mild_sol_vett}, and by repeating the computation in this section.
\end{proof}

\section{Chain rule for non-linear Young equations}
\label{sect-5}
In this subsection we use the integral representation \eqref{int_solution} of the unique mild solution $y$ to problem \eqref{omo_equation} to prove a chain rule for $F(\cdot,y(\cdot))$, where $F$ is a smooth function.
\begin{theorem}
\label{thm:chain_rula}
Let $F\in C^1([0,T]\times X)$ be such that and $F_x$ is $\alpha$-H\"older continuous with respect to $t$, locally uniformly with respect to $x$ and is locally $\gamma$-H\"older continuous with respect to $x$,
uniformly with respect to $t$, for some $\alpha, \gamma\in (0,1)$ such that $\eta +\alpha\gamma>1$. Further,  let $y$ be the unique mild solution to \eqref{omo_equation}. Then,
\begin{align*}
F(t,y(t))-F(s,y(s))
=&  \int_s^tF_t(u,y(u))du+\int_s^t\langle F_x(u,y(u)),Ay(u)\rangle du\notag\\
& +\int_s^t\langle F_x(u,y(u)),\sigma(y(u))\rangle dx(u)
\end{align*}
for every $(s,t)\in [0,T]$.
\end{theorem}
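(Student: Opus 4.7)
The plan is to exploit the integral representation of $y$ from Theorem \ref{th-soluzione-classica} and reduce the chain rule to a Riemann-sum computation. Fix $0 < s < t \le T$ and a partition $\Pi = \{s = t_0 < \cdots < t_n = t\}$ of $[s,t]$. Telescoping and applying a first-order Taylor expansion separately in each variable, I would write
\begin{align*}
F(t_{i+1}, y(t_{i+1})) - F(t_i, y(t_i)) = \int_{t_i}^{t_{i+1}} F_t(u, y(t_{i+1}))\,du + \int_0^1 \langle F_x(t_i, y(t_i) + \theta \Delta_i y), \Delta_i y\rangle\, d\theta,
\end{align*}
where $\Delta_i y := y(t_{i+1}) - y(t_i)$. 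By Theorem \ref{th-soluzione-classica}, this increment decomposes as $\Delta_i y = \Delta_i^A + \Delta_i^\sigma$ with $\Delta_i^A := \int_{t_i}^{t_{i+1}} Ay(u)\,du$ and $\Delta_i^\sigma := \int_{t_i}^{t_{i+1}} \sigma(y(u))\,dx(u)$.

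Sending the mesh $|\Pi|$ to zero, the first Taylor term sums to $\int_s^t F_t(u, y(u))\,du$ by uniform continuity of $u \mapsto F_t(u, y(u))$ on $[s,t]$. For the second, I would replace $F_x(t_i, y(t_i) + \theta\Delta_i y)$ by its value at $\theta = 0$. The resulting leading term, paired with $\Delta_i^A$, is a Riemann sum converging to $\int_s^t \langle F_x(u, y(u)), Ay(u)\rangle\,du$ (the integrand being continuous on $(0,T]$ since $y \in C((0,T]; X_\mu)$ for every $\mu < \eta+\alpha$ by Remark \ref{rmk:remark_teorema_principale}(ii)); paired with $\Delta_i^\sigma$, together with the expansion $\Delta_i^\sigma = \sigma(y(t_i))(x(t_{i+1}) - x(t_i)) + {\mathscr R}_{\sigma(y)}(t_i, t_{i+1})$ coming from Theorem \ref{teo-2.4} (with $|{\mathscr R}_{\sigma(y)}(t_i,t_{i+1})| = O(|t_{i+1}-t_i|^{\eta+\alpha})$, whose contribution is summable to $O(|\Pi|^{\eta+\alpha-1}(t-s)) \to 0$), it yields Young-type Riemann sums converging to $\int_s^t \langle F_x(u, y(u)), \sigma(y(u))\rangle\, dx(u)$.

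The main obstacle is controlling the error from replacing $F_x(t_i, y(t_i) + \theta\Delta_i y)$ by $F_x(t_i, y(t_i))$. Using the $\gamma$-H\"older regularity of $F_x$ in $x$ together with the $\alpha$-H\"older continuity of $y$ in $X$ (Remark \ref{rmk:remark_teorema_principale}(i)), this error is $O(|\Delta_i y|^\gamma) = O(|t_{i+1} - t_i|^{\alpha\gamma})$. Paired with $\Delta_i^A$, which is $O(|t_{i+1} - t_i|)$ on $[s,t]$ thanks to the bound $|Ay(u)|_0 \le c\, u^{\lambda-1}$ from Theorem \ref{es_mild_sol_gub} and $s > 0$, the summed error is $O(|\Pi|^{\alpha\gamma}) \to 0$; paired with $\Delta_i^\sigma$, which is $O(|t_{i+1} - t_i|^\eta)$ by Theorem \ref{teo-2.4}, the summed error is $O(|\Pi|^{\alpha\gamma + \eta - 1}(t-s))$, which vanishes \emph{precisely} thanks to the hypothesis $\eta + \alpha\gamma > 1$. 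Finally, the case $s = 0$ follows by letting $s \downarrow 0$ on both sides: the left-hand side is continuous in $s$ by continuity of $F$ and $y$, and both the Lebesgue integral involving $Ay$ and the Young integral are continuous down to $0$ (respectively by integrability of $u \mapsto u^{\lambda-1}$ on $(0,T]$, valid for some $\lambda > 0$, and by the estimate \eqref{stima_int_young_nosemi}).
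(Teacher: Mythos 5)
Your proposal is correct and follows essentially the same route as the paper: telescoping plus a Taylor expansion in each variable, the integral representation of $y$ from Theorem \ref{th-soluzione-classica} to split each increment into $\Delta_i^A+\Delta_i^\sigma$, sewing-type remainder bounds to identify the Young integral of $u\mapsto\langle F_x(u,y(u)),\sigma(y(u))\rangle$ (which is $\alpha\gamma$-H\"older, whence the hypothesis $\eta+\alpha\gamma>1$), and finally the passage $s\to0^+$ via the bound $|Ay(u)|_0\le cu^{\lambda-1}$. Two small remarks: the decomposition $\Delta_i^\sigma=\sigma(y(t_i))(x(t_{i+1})-x(t_i))+\mathscr R_{\sigma(y)}(t_i,t_{i+1})$ comes from Theorem \ref{thm:young_int} (the scalar Young integral), not Theorem \ref{teo-2.4}, and your H\"older-quantified control of the error from replacing $F_x(t_i,y(t_i)+\theta\Delta_i y)$ by $F_x(t_i,y(t_i))$ (giving $O(|\Pi|^{\alpha\gamma+\eta-1})$) is actually a sharper justification than the paper's appeal to uniform continuity of $F_x$ for the analogous term $I_{4,n}$, since $\sum_i|\Delta_i y|$ need not stay bounded; likewise your direct Riemann-sum argument for the $\langle F_x,\Delta_i^A\rangle$ term avoids the paper's detour through the H\"older continuity of $Ay$.
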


\begin{proof}
Let us fix $0<s<t\leq T$ and a sequence $(\Pi_n(s,t))$ of partitions $\Pi_n(s,t)=\{s=s_0^n<s_1^n<\ldots<s_{m_n}^n=t\}$ of $[s,t]$ and note that
\begin{align*}
&F(t,y(t))-F(s,y(s))\\
=  & \sum_{j=1}^{m_n}F(s^n_j,y(s^n_j)) -F(s^n_{j-1},y(s^n_{j-1})) \\
= & \sum_{j=1}^{m_n}[F(s^n_j,y(s^n_j))-F(s^n_{j-1},y(s^n_{j}))
+F(s^n_{j-1},y(s^n_j))-F(s^n_{j-1},y(s^n_{j-1}))]\\
= &\sum_{j=1}^{m_n}F_t(s^n_{j},y(s^n_j))\Delta s^n_j
+\sum_{j=1}^{m_n}\big(F_t(\tilde s^n_{j},y(s^n_{j}))-F_t(s^n_{j},y(s^n_{j}))\big)\Delta s^n_j  \\
& + \sum_{j=1}^{m_n}\langle F_x(s^n_{j-1},y(s^n_{j-1})),\Delta y_j\rangle
+\sum_{j=1}^{m_n} \langle F_{x}(s^n_{j-1},\tilde y_j)-F_x(s^n_{j-1},y(s^n_{j-1})),\Delta y_j\rangle \\
=: & I_{1,n}+I_{2,n}+I_{3,n}+I_{4,n},
\end{align*}
where
$\Delta y_j=y(s^n_j)-y(s^n_{j-1})$, $\Delta s^n_j=s^n_j-s^n_{j-1}$, $\tilde s^n_j=s^n_{j-1}+\theta_j^n(s^n_j-s^n_{j-1})$, $\tilde y_j=y(s^n_{j-1})+\eta_j^n(y(s^n_j)-y(s^n_{j-1}))$ and
$\theta_j^n,\eta_j^n\in (0,1)$ for every $j=1,\ldots,m_n$.
Without loss of generality we can assume that $|\Pi_n(s,t)|$ tends to zero as $n$ tends to $+\infty$.

{\em Analysis of the terms $I_{1,n}$ and $I_{2,n}$}. Since the function $s\mapsto F_t(s,y(s))$ is continuous in $[0,T]$, $I_{1,n}$ converges to $\displaystyle\int_s^tF_t(u,y(u))du$ as $n$ tends to $+\infty$.
Moreover, since $y([0,T])$ is compact subset of $X$, the restriction of function $F_t$ to $[0,T]\times y([0,T])$ is uniformly continuous.
Thus, for every $\varepsilon>0$ there exists a positive constant $\delta$ such that $|F_t(t_2,x_2)-F_t(t_1,x_1)|\le\varepsilon$ if $|t_2-t_1|^2+|x_2-x_1|^2\le\delta^2$. As a byproduct, it follows that, if $|\Pi(s,t)|\le\delta$, then
$|I_{2,n}|\le\varepsilon\sum_{j=1}^n\Delta s^n_j=\varepsilon(t-s)$ and this shows that $I_{2,n}$ converges to $0$ as $n$ tends to $+\infty$.

{\em Analysis of the term $I_{3,n}$}. Using \eqref{int_solution} we can write (see Remark \ref{rem-davide})
\begin{align}
&\langle F_x(s^n_{j-1},y(s^n_{j-1})),\Delta y_j\rangle \notag \\
=& \bigg\langle F_x(s^n_{j-1},y(s^n_{j-1})),\int_{s^n_{j-1}}^{s^n_j}Ay(u)du+\int_{s^n_{j-1}}^{s^n_j}\sigma(y(u))dx(u)\bigg\rangle  \notag \\
= & \langle F_x(s^n_{j-1},y(s^n_{j-1})),Ay(s^n_{j-1})\rangle \Delta s^n_j \notag \\
&+\bigg\langle F_x(s^n_{j-1},y(s^n_{j-1})),
\int_{s^n_{j-1}}^{s^n_j}(Ay(u)-Ay(s^n_{j-1}))du\bigg\rangle \notag \\
& +\bigg\langle F_x(s^n_{j-1},y(s^n_{j-1})), \int_{s^n_{j-1}}^{s^n_j}(\sigma(y(u))-\sigma(y(s^n_{j-1})))dx(u)\bigg\rangle\notag\\
& +\langle F_x(s^n_{j-1},y(s^n_{j-1})),\sigma(y(s^n_{j-1}))\rangle (x(s^n_j)-x(s^n_{j-1}))
\label{divisione_I_3}
\end{align}
for $j=1,\ldots,m$. By assumptions, the function $s\mapsto F_x(s,y(s))$ is continuous with values in $X'$. Similarly, by Theorem \ref{es_mild_sol_gub} the function $Ay$ is continuous in $(0,T]$. Indeed, $y$ belongs to $\hat C_{\eta+\alpha-\mu}([s,T];X_{\mu})$ for every $\mu\in [\eta+\alpha-1,\eta+\alpha)$. Taking $\mu=1$ we deduce that
$|(\hat\delta_1y)(u,w)|_1\leq c|w-u|^{\eta+\alpha-1}$ for every $(u,w)\in[s,t]_<^2$ and some positive constant $c$, independent of $u$ and $w$. Hence,
\begin{align*}
|Ay(u)-Ay(w)|
\le & |(\hat \delta_1y)(u,w)|_1 +|{\mathfrak a}(u,w)Ay(u)|_0 \\
\le & c|w-u|^{\eta+\alpha-1}+|{\mathfrak a}(u,w)Ay(u)|_0.
\end{align*}
Choosing $\mu=1+\rho$ for some $\rho<\eta+\alpha-1$ and using \eqref{stime_smgr}$(b)$ we get
\begin{align*}
|a(u,w)Ay(w)|_0\leq C_{0,\rho}|w-u|^{\rho}|Ay(w)|_{\rho}
\leq C_{0,\rho}|w-u|^{\rho}\|y\|_{1+\rho,[\varepsilon,T]}.
\end{align*}
Therefore, $Ay$ is $\rho$-H\"older continuous in $[\varepsilon,T]$. We thus conclude that
\begin{align}
\label{convergenza_1_chain_rule}
\lim_{n\to+\infty}\sum_{j=1}^{m_n}\langle F_x(s^n_{j-1},y(s^n_{j-1})),Ay(s^n_{j-1})\rangle \Delta s^n_j=\int_s^t\langle F_x(u,y(u)),Ay(u)\rangle du
\end{align}
and
\begin{align*}
\bigg|\int_{s^n_{j-1}}^{s^n_j}(Ay(u)-Ay(s^n_{j-1}))du\bigg |\le & [Ay]_{C^{\rho}([s,T];X)}|s^n_j-s^n_{j-1}|^{1+\rho}\\
\le &[Ay]_{C^{\rho}([s,T];X)}(s^n_j-s^n_{j-1})|\Pi(s,t)|^{\rho}
\end{align*}
for every $j=1,\ldots,m$, so that
\begin{align}
&\bigg |\sum_{j=1}^{m_n}\bigg\langle F_x(s^n_{j-1},y(s^n_{j-1})),\int_{s^n_{j-1}}^{s^n_j}(Ay(u)-Ay(s^n_{j-1}))du\bigg \rangle\bigg |\notag\\
\le &\|F_x\|_{C([0,T]\times y([0,T]);X')}[Ay]_{C^{\rho}([s,T];X)}(t-s)|\Pi_n(s,t)|^{\rho}
\label{convergenza_2_chain_rule}
\end{align}
and the right-hand side of the previous inequality vanishes as $n$ tends to $+\infty$.

Let us consider the third term in the right-hand side of \eqref{divisione_I_3}.
From Theorem \ref{thm:young_int} and recalling that $\alpha+\eta>1$, we infer that
\begin{align*}
&\left|\int_{s^n_{j-1}}^{s^n_j}(\sigma(y(u))-\sigma(y(s^n_{j-1})))dx(u)\right|_0 \\
= & \left|\int_{s^n_{j-1}}^{s^n_j}\sigma(y(u))dx(u)-\sigma(y(s^n_{j-1}))(x(s^n_j)-x(s^n_{j-1})) \right|_0\\
\leq  & \frac{1}{1-2^{\alpha-\eta-1}}\|\delta_1\sigma(y)\|_{\alpha|0,[0,T]}\|x\|_{C^{\eta}([0,T])}|s^n_j-s^n_{j-1}|^{\alpha+\eta}\\
\leq  & \frac{1}{1-2^{\alpha-\eta-1}}\|\delta_1\sigma(y)\|_{\alpha|0,[0,T]}\|x\|_{C^{\eta}([0,T])}(s^n_j-s^n_{j-1})|\Pi_n(s,t)|^{\alpha+\eta-1}\\
\end{align*}
for $j=1,\ldots,m$. Hence,
\begin{align*}
&\bigg |\sum_{j=1}^{m_n}\bigg\langle F_x(s^n_{j-1},y(s^n_{j-1})),\int_{s^n_{j-1}}^{s^n_j}(\sigma(y(u))-\sigma(y(s^n_{j-1})))dx(u)\bigg\rangle \bigg |\\
\le & \frac{1}{1-2^{\alpha-\eta-1}}\|F_x\|_{C([0,T]\times y([0,T]);X')}(t-s)|\Pi_n(s,t)|^{\alpha+\eta-1}.
\end{align*}
Letting $n$ tend to $+\infty$ gives
\begin{align}
\label{convergenza_3_chain_rule}
\lim_{n\to +\infty}\sum_{j=1}^{m_n}\bigg\langle F_x(s^n_{j-1},y(s^n_{j-1})),\int_{s^n_{j-1}}^{s^n_j}(\sigma(y(u))-\sigma(y(s^n_{j-1})))dx(u)\bigg\rangle  =0.
\end{align}

To conclude the study of $I_{3,n}$ it remains to consider the term
\begin{align*}
\langle F_x(s^n_{j-1},y(s^n_{j-1})),\sigma(y(s^n_{j-1}))\rangle (x(s^n_j)-x(s^n_{j-1})).
\end{align*}
For this purpose, we introduce the function $g:[s,t]\to\R$, defined by $g(\tau)=\langle F_x(\tau,y(\tau)),\sigma(y(\tau))\rangle$ for every $\tau\in [s,t]$. Let us prove that
$g\in C^{\alpha\gamma}([s,t])$. To this aim, we recall that
\begin{eqnarray*}
|\sigma(y(\tau))|_0\le K_{0,\alpha}|\sigma(y(\tau))|_{\alpha}\le K_{0,\alpha}L_{\sigma}^{\alpha}(1+\|y\|_{\alpha,[0,T]}),\qquad\;\,\tau\in [0,T].
\end{eqnarray*}
Hence, we can estimate
\begin{align*}
|g(\tau_2)-g(\tau_1)|=&|\langle F_x(\tau_2,y(\tau_2)),\sigma(y(\tau_2))\rangle-\langle F_x(\tau_1
,y(\tau_1)),\sigma(y(\tau_1))\rangle|\\
\le &|\langle F_x(\tau_2,y(\tau_2))-F_x(\tau_2,y(\tau_1)),\sigma(y(\tau_2))\rangle|\\
&+\langle F_x(\tau_2,y(\tau_1))-F_x(\tau_1,y(\tau_1)),\sigma(y(\tau_2))\rangle\\
&+|\langle F_x(\tau_1,y(\tau_1)),\sigma(y(\tau_2))-\sigma(y(\tau_1))\rangle|\\
\le & K_{0,\alpha}L_{\sigma}^{\alpha}\sup_{t\in[0,T]}\|F_x(t,\cdot)\|_{C^{\gamma}(y([0,T]);X')}(1+\|y\|_{\alpha,[0,T]})|y(\tau_2)-y(\tau_1)|_0^{\gamma}\\
&+K_{0,\alpha}L_{\sigma}^{\alpha}\sup_{x\in y([0,T])}[F_x(\cdot,x)]_{C^{\alpha}([0,T];X')}(1+\|y\|_{\alpha,[0,T]})|\tau_2-\tau_1|^{\alpha}\\
&+L_{\sigma}\|F_x\|_{C([0,T]\times y([0,T]);X')}|y(\tau_2)-y(\tau_1)|_0 \\
\le &\bigg (K_{0,\alpha}L_{\sigma}^{\alpha}\sup_{t\in[0,T]}\|F_x(t,\cdot)\|_{C^\gamma(y([0,T]);X')}(1+\|y\|_{\alpha,[0,T]})[y]_{C^{\alpha}([0,T];X)}\\
&\;\;\,+K_{0,\alpha}L_{\sigma}^{\alpha}\sup_{x\in y([0,T])}[F_x(\cdot,x)]_{C^{\alpha}([0,T];X')}(1+\|y\|_{\alpha,[0,T]})T^{\alpha(1-\gamma)}\\
&\;\;\,+L_{\sigma}\|F_x\|_{C([0,T]\times y([0,T]);X')}[y]_{C^{\alpha}([0,T];X)}T^{\alpha(1-\gamma)}\bigg )|\tau_2-\tau_1|^{\alpha\gamma},
\end{align*}
for every $\tau_1,\tau_2\in [s,t]$, which shows that $g$ is $\alpha\gamma$-H\"older continuous in $[s,t]$. Since $\eta+\gamma\alpha>1$ we can apply Theorem \ref{thm:young_int} which implies that
\begin{align}
&\lim_{n\to+\infty}\sum_{j=1}^{m_n}\langle F_x(s^n_{j-1},y(s^n_{j-1})),\sigma(y(s^n_{j-1}))\rangle (x(s^n_j)-x(s^n_{j-1}))\notag\\
=&\int_s^t\langle F_x(u,y(u)),\sigma(y(u))\rangle dx(u),
\label{convergenza_4_chain_rule}
\end{align}
where the integral is well defined as Young integral. From \eqref{convergenza_1_chain_rule}-\eqref{convergenza_4_chain_rule} we conclude that
\begin{align*}
\lim_{n\to +\infty}I_{3,n}=\int_s^t\langle F_x(u,y(u)),Ay(u)\rangle du+ \int_s^t\langle F_x(u,y(u)),\sigma(y(u))\rangle dx(u).
\end{align*}

To complete the proof, we observe that $I_{4,n}$ converges to $0$ as $n$ tends to $+\infty$. This property can be checked arguing as we did for the term $I_{2,n}$, noting that
$F_x$ is uniformly continuous in $[0,T]\times y([0,T])$.

Summing up, we have proved that
\begin{align}
F(t,y(t))-F(s,y(s))
=&  \int_s^tF_t(u,y(u))du+\int_s^t\langle F_x(u,y(u)),Ay(u)\rangle du \notag\\
& +\int_s^t\langle F_x(u,y(u)),\sigma(y(u))\rangle dx(u),
\label{chain_rule_st}
\end{align}
for every $0<s<t\leq T$. As $s$ tends to $0^+$, the left-hand side converges to $F(t,y(t))-F(0,y(0))$.  As far as the right-hand side is concerned, the first and the third term converge to the corresponding integrals over $[0,t]$ since the functions $u\mapsto F_t(u,y(u))$ and $u\mapsto F_x(u,y(u))$ are continuous in $[0,T]$. As far as the second term in the right-hand side of \eqref{chain_rule_st} is concerned, thanks to \eqref{stima_lambda_mu} with $\mu=0$ we can apply the dominated convergence theorem which yields the convergence to the integral over $(0,t)$. The assertion in its full generality follows.
\end{proof}

The same arguments as in the proof of Theorem \ref{thm:chain_rula} and Corollary \ref{coro:int_sol_vett} give the following result.
\begin{corollary}
\label{coro:chain_rule_vett}
Let $\sigma_i:X\rightarrow X$ $(i=1,\ldots,m)$ satisfy Hypotheses \ref{ip_nonlineare}, let the paths $x_i\in C^\eta([0,T])$ $(i=1,\ldots,n)$ belong to $C^{\eta}([0,T])$, and let $y$ be the unique mild solution to \eqref{equazione_vettoriale} with $\psi\in X_\alpha$, with $\alpha+\eta>1$. Then, For any function $F$ satisfying the assumptions in Theorem $\ref{thm:chain_rula}$ it holds that
\begin{align}
\label{chain_rule_vett}
F(t,y(t))-F(s,y(s))
=&  \int_s^tF_t(u,y(u))du+\int_s^t\langle F_x(u,y(u)),Ay(u)\rangle du\notag\\
& +\sum_{i=1}^m\int_s^t\langle F_x(u,y(u)),\sigma_i(y(u))\rangle dx_i(u)
\end{align}
for every $(s,t)\in [0,T]$.
\end{corollary}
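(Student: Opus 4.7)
My plan is to follow the scheme of proof of Theorem \ref{thm:chain_rula} essentially verbatim, using Corollary \ref{coro:int_sol_vett} in place of Theorem \ref{th-soluzione-classica} and exploiting linearity of the Young integral to handle the finite sum over $i=1,\ldots,m$. First, fix $0<s<t\le T$ and a sequence of partitions $\Pi_n(s,t)=\{s=s_0^n<\ldots<s_{m_n}^n=t\}$ with $|\Pi_n(s,t)|\to 0$. By the same telescoping identity and mean-value decomposition of $F$ used in the proof of Theorem \ref{thm:chain_rula}, write
\begin{align*}
F(t,y(t))-F(s,y(s))=I_{1,n}+I_{2,n}+I_{3,n}+I_{4,n},
\end{align*}
where $I_{1,n}$, $I_{2,n}$ come from the $t$-variable increments and $I_{3,n}$, $I_{4,n}$ from the $x$-variable increments.

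The analyses of $I_{1,n}$, $I_{2,n}$ and $I_{4,n}$ transfer without change: $I_{1,n}\to\int_s^tF_t(u,y(u))du$ by continuity of $u\mapsto F_t(u,y(u))$, and $I_{2,n}$, $I_{4,n}$ vanish by the uniform continuity of $F_t$ and $F_x$ on the compact set $[0,T]\times y([0,T])$. The only substantive modification occurs in $I_{3,n}$: there, instead of appealing to \eqref{int_solution}, we invoke \eqref{int_solution_vett} to rewrite, for each $j$,
\begin{align*}
\Delta y_j=\int_{s_{j-1}^n}^{s_j^n}Ay(u)du+\sum_{i=1}^m\int_{s_{j-1}^n}^{s_j^n}\sigma_i(y(u))dx_i(u).
\end{align*}
Inserting this into $I_{3,n}$ and using the linearity of $\langle F_x(s_{j-1}^n,y(s_{j-1}^n)),\cdot\rangle$ produces one sum involving $Ay$, identical to that of Theorem \ref{thm:chain_rula}, plus $m$ sums, each associated to one path $x_i$ and of exactly the form treated in that theorem.

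For the $Ay$-contribution, the convergence to $\int_s^t\langle F_x(u,y(u)),Ay(u)\rangle du$ and the negligibility of the remainders $\int_{s_{j-1}^n}^{s_j^n}(Ay(u)-Ay(s_{j-1}^n))du$ rely on the H\"older continuity of $Ay$ on $[s,T]$; this is ensured by Remark \ref{rmk:mild_sol_vett}, which states that the regularity results of Theorem \ref{es_mild_sol_gub} carry over to the vector-valued equation \eqref{equazione_vettoriale}, so the estimates \eqref{stima_lambda_mu}, $y\in\hat C_{\eta+\alpha-\mu}([s,T];X_\mu)$, etc., are all at our disposal. For each index $i\in\{1,\ldots,m\}$ separately, the convergence of
\begin{align*}
\sum_{j=1}^{m_n}\langle F_x(s_{j-1}^n,y(s_{j-1}^n)),\sigma_i(y(s_{j-1}^n))\rangle(x_i(s_j^n)-x_i(s_{j-1}^n))
\end{align*}
to the Young integral $\int_s^t\langle F_x(u,y(u)),\sigma_i(y(u))\rangle dx_i(u)$ follows by verifying, exactly as in the proof of Theorem \ref{thm:chain_rula} with $\sigma$ replaced by $\sigma_i$, that $u\mapsto\langle F_x(u,y(u)),\sigma_i(y(u))\rangle$ is $\alpha\gamma$-H\"older continuous on $[s,t]$, and then applying Theorem \ref{thm:young_int}, which applies because $\eta+\alpha\gamma>1$. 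The remainders $\int_{s_{j-1}^n}^{s_j^n}(\sigma_i(y(u))-\sigma_i(y(s_{j-1}^n)))dx_i(u)$ are uniformly controlled in $j$ thanks to the sewing estimate \eqref{stima_resto_Rf} applied to $\sigma_i(y)$ and $x_i$.

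Combining these ingredients yields the identity \eqref{chain_rule_vett} on every subinterval $[s,t]\subset(0,T]$. Finally, passing to the limit as $s\to 0^+$ is justified, for the $F_t$- and $F_x$-integrals along $u$, by the continuity of the integrands on $[0,T]$; for the $Ay$-integral one uses the dominated convergence theorem together with the bound $|Ay(u)|\le cu^{\lambda-1}$ for $\lambda\in[0,\eta+\alpha-1)$ (from \eqref{stima_lambda_mu} with $\mu=0$, again valid by Remark \ref{rmk:mild_sol_vett}); for each Young integral along $x_i$, one uses that its restriction to $[s,t]$ is continuous in $s$ up to $s=0$ by Remark \ref{rem-davide}. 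The main obstacle is essentially bookkeeping: one must make sure that every step in the proof of Theorem \ref{thm:chain_rula} that uses regularity of $y$, $Ay$ and $\sigma(y)$ still applies when $\sigma$ is replaced by the family $\sigma_1,\ldots,\sigma_m$, which is precisely the content of Remark \ref{rmk:mild_sol_vett}, so no genuine new difficulty arises.
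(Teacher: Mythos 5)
Your proposal is correct and follows exactly the route the paper intends: the paper gives no separate argument for Corollary \ref{coro:chain_rule_vett} beyond stating that it follows from the same computations as Theorem \ref{thm:chain_rula}, now using the vector integral representation \eqref{int_solution_vett} of Corollary \ref{coro:int_sol_vett} and the regularity of $y$ guaranteed by Remark \ref{rmk:mild_sol_vett}, with linearity splitting $I_{3,n}$ into the $Ay$-term and $m$ Young-integral terms treated one path at a time. Your handling of the limit $s\to 0^+$ (continuity for the $F_t$- and $F_x$-integrands, dominated convergence with $|Ay(u)|\le cu^{\lambda-1}$ for the $Ay$-term, additivity of the Young integral for the $x_i$-terms) likewise matches the paper's argument.
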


\section{Invariance of convex sets}
\label{sect-6}

Let $X$ be a Hilbert space with inner product $\langle\cdot,\cdot\rangle$, $A:D(A)\subset X\rightarrow X$ be a self-adjoint nonpositive closed operator which generates an analytic semigroup of bounded linear operators $(S(t))_{t\geq0}$ on $H$ and suppose that the results of the previous section hold true with   $X_\zeta=D((-A)^\zeta)$ for any $\zeta\geq0$.

We say that a closed convex set $K\subset X$ is invariant for the mild solution $y$ to \eqref{equazione_vettoriale}, if for any $\psi\in X_\alpha\cap K$, then $y(t)$ belongs to $K$ for any $t\in[0,T]$. Formula \eqref{chain_rule_vett} implies the following result.

\begin{proposition}
\label{prop-6.1}
Let Hypotheses $\ref{hyp-main}$, $\ref{ip:young_path}$, $\ref{ip_nonlineare}$ be fulfilled with $\eta+\alpha>1$. Let $\varphi\in X_\varepsilon$ for some $\varepsilon\in[0,1)$, $\psi\in X_\zeta$ for some $\zeta\in[\alpha,1)$ and let $K:=\{x\in X:\langle x,\varphi\rangle\leq 0\}$ be invariant for $y$. Then:
\begin{enumerate}[\rm (i)]
\item
If $\psi\in \partial K$ and $\eta\leq \zeta+\varepsilon\le 1$, then
\begin{align*}
\limsup_{t\rightarrow0^+}t^{-\beta}\sum_{i=1}^m\langle \varphi,\sigma_i(\psi)\rangle (x_i(t)-x_i(0))\leq 0,
\end{align*}
for $\beta\in[\eta,\zeta+\varepsilon)$ and
\begin{align*}
\sup_{\lambda>0}\limsup_{t\rightarrow0^+}t^{-\beta}\displaystyle\bigg  (
&-\int_0^t\frac{(\lambda+\langle \varphi,y(s)\rangle)_+}{\lambda}\langle (-A)^\varepsilon \varphi,(-A)^{1-\varepsilon}y(s)\rangle ds \\
&\;+\sum_{i=1}^m\langle \varphi,\sigma_i(\psi)\rangle (x_i(t)-x_i(0))\bigg )\leq 0,
\end{align*}
for $\beta\in[\zeta+\varepsilon,1]$.
\item
If $y(t_0)\in \partial K$ for some $t_0\in [0,T)$ and $\zeta+\varepsilon>1$, then
\begin{align*}
\limsup_{t\rightarrow t_0^+}|t-t_0|^{-\beta}\sum_{i=1}^m\langle \varphi,\sigma_i(y(t_0))\rangle (x_i(t)-x_i(t_0))\leq 0,
\end{align*}
if $\beta\in[\eta,1)$ and
\begin{align*}
\limsup_{t\rightarrow t_0^+}|t-t_0|^{-1}\sum_{i=1}^m\langle \varphi,\sigma_i(y(t_0))\rangle (x_i(t)-x_i(t_0))-\langle (-A)^{\varepsilon}\varphi, (-A)^{1-\varepsilon}y(t_0)\rangle\leq 0,
\end{align*}
if $\beta=1$. 
\end{enumerate}
\end{proposition}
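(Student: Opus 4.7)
The plan is to apply the chain rule (Corollary \ref{coro:chain_rule_vett}) to suitably chosen test functions and exploit the invariance $y(t)\in K$. I will use repeatedly that self-adjointness of $A$ together with $\varphi\in D((-A)^\ep)$ give $\langle \varphi, Ay(u)\rangle=-\langle (-A)^\ep\varphi,(-A)^{1-\ep}y(u)\rangle$ for every $u\in(0,T]$, which is legitimate because $y(u)\in D(A)\subset D((-A)^{1-\ep})$ by Theorem \ref{es_mild_sol_gub}.

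For part (i), I introduce the $C^{1,1}$ function $G_\lambda(r):=(\lambda+r)_+^2/(2\lambda)$, whose derivative $G_\lambda'(r)=(\lambda+r)_+/\lambda$ is globally Lipschitz, and apply the chain rule to $F_\lambda(x):=G_\lambda(\langle x,\varphi\rangle)$. Its gradient $F_\lambda'(x)=G_\lambda'(\langle x,\varphi\rangle)\varphi$ is Lipschitz in $x$, so the hypotheses of Theorem \ref{thm:chain_rula} hold with $\gamma=1$ and $\eta+\alpha\cdot 1>1$. Since $\psi\in\partial K$ gives $G_\lambda(\langle\psi,\varphi\rangle)=\lambda/2$ while invariance $y(t)\in K$ forces $G_\lambda(\langle y(t),\varphi\rangle)\le\lambda/2$, the chain rule produces the master inequality
\begin{align*}
-\int_0^t\frac{(\lambda+\langle y(u),\varphi\rangle)_+}{\lambda}\langle (-A)^\ep\varphi,(-A)^{1-\ep}y(u)\rangle du+\sum_{i=1}^m\int_0^t G_\lambda'(\langle y(u),\varphi\rangle)\langle\varphi,\sigma_i(y(u))\rangle dx_i(u)\le 0.
\end{align*}
The composition $G_\lambda'\circ\langle y(\cdot),\varphi\rangle$ is $\alpha$-H\"older with value $1$ at $0$, so Theorem \ref{thm:young_int} expands each Young integral as $\langle\varphi,\sigma_i(\psi)\rangle(x_i(t)-x_i(0))+O(t^{\eta+\alpha})$. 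For $\beta\in[\eta,\zeta+\ep)$ it is simplest to use instead the linear functional $F(x)=\langle x,\varphi\rangle$ and to bound the first integral by $c\,t^{\zeta+\ep}$ via the estimate $|(-A)^{1-\ep}y(u)|_0\le c\,u^{\zeta+\ep-1}$ from Remark \ref{rmk:remark_teorema_principale}; after division by $t^\beta$ both error terms vanish since $\beta<\zeta+\ep$ and $\beta\le 1<\eta+\alpha$. For $\beta\in[\zeta+\ep,1]$ one keeps the master inequality as is: the Young remainders still contribute $o(t^\beta)$ (again by $\beta\le 1<\eta+\alpha$), so dividing by $t^\beta$, taking $\limsup$ and then supremum over $\lambda>0$ yields the stated bound.

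For part (ii) the assumption $\zeta+\ep>1$ entails $\psi\in D((-A)^{1-\ep})$, hence $(-A)^{1-\ep}y\in C([0,T];X)$. Applying the chain rule with $F(x)=\langle x,\varphi\rangle$ on $[t_0,t]$ and using $y(t_0)\in\partial K$ together with $y(t)\in K$ gives
\begin{align*}
0\ge\langle y(t)-y(t_0),\varphi\rangle=-\int_{t_0}^t\langle (-A)^\ep\varphi,(-A)^{1-\ep}y(u)\rangle du+\sum_{i=1}^m\int_{t_0}^t\langle\varphi,\sigma_i(y(u))\rangle dx_i(u).
\end{align*}
Continuity of $(-A)^{1-\ep}y$ at $t_0$ rewrites the first integral as $(t-t_0)\langle(-A)^\ep\varphi,(-A)^{1-\ep}y(t_0)\rangle+o(t-t_0)$, while Theorem \ref{thm:young_int} expands each Young integral as $\langle\varphi,\sigma_i(y(t_0))\rangle(x_i(t)-x_i(t_0))+O((t-t_0)^{\eta+\alpha})$. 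Dividing by $(t-t_0)^\beta$ and letting $t\to t_0^+$: for $\beta\in[\eta,1)$ the integral term dies and the first bound follows, while for $\beta=1$ it survives and gives the second.

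The main technical subtlety is checking that Corollary \ref{coro:chain_rule_vett} applies to $F_\lambda$, i.e.\ that $G_\lambda\in C^1(\R)$ with globally Lipschitz derivative, which is immediate from the explicit formula (the derivative vanishes as $r\to-\lambda^\pm$). Everything else is careful bookkeeping of Young-integral remainders via Theorem \ref{thm:young_int} and of the blow-up estimates of Remark \ref{rmk:remark_teorema_principale}, which together guarantee that all error terms are $o(t^\beta)$.
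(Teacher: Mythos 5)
Your proof is correct and follows essentially the paper's own argument: the same truncated quadratic $F_\lambda$ (up to the harmless normalization $1/(2\lambda)$), the chain rule of Corollary \ref{coro:chain_rule_vett}, self-adjointness to rewrite $\langle \varphi, Ay(u)\rangle=-\langle (-A)^{\varepsilon}\varphi,(-A)^{1-\varepsilon}y(u)\rangle$, the first-order Young expansion with remainder $O(|t-t_0|^{\eta+\alpha})$, and the estimate $|y(s)|_{1-\varepsilon}\le cs^{\zeta+\varepsilon-1}$ (resp. continuity of $(-A)^{1-\varepsilon}y$ when $\zeta+\varepsilon>1$). Your use of the linear functional $\langle\cdot,\varphi\rangle$ for the first bound in (i) and for part (ii) is only a cosmetic streamlining of the paper's device of dividing the $F_\lambda$-inequality by $\lambda$, so the two arguments coincide in substance.
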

\begin{remark}
{\rm If $t_0>0$ in $(ii)$ then $y(t)\in X_{1+\mu}$ for any $\mu\in[0,\eta+\alpha-1)$ and $t\in(0,T]$ (see Theorem \ref{es_mild_sol_gub}). Hence, the condition $\zeta+\varepsilon>1$ is automatically satisfied.}
\end{remark}

\begin{proof}[Proof of Proposition $\ref{prop-6.1}$]
For any $\lambda>0$ we introduce the function $F_{\lambda}:X\to X$, defined by $F_\lambda(x):=(\lambda+\langle \varphi,x\rangle)^2_+$, for any $x\in X$. As it is easily seen, each function $F_\lambda$ belongs to $C^{1,1}(X)$ and $DF_\lambda(x)=2(\lambda+\langle \varphi,x\rangle)_+\varphi$ for any $x\in X$. Further, for any $x\in K$ it holds that $F_\lambda(x)\leq \lambda^2$, and $F_\lambda(x)=\lambda^2$ if and only if $x\in \partial K$. For any $t_0\in[0,T)$, if $y(t_0)\in \partial K$ then from \eqref{chain_rule_vett} it follows that
\begin{align}
0
\geq & F_\lambda(y(t))-F_\lambda(y(t_0)) \notag \\
=&  2\int_{t_0}^t(\lambda+\langle \varphi,y(s)\rangle)_+\langle \varphi,Ay(s)\rangle ds
+2\sum_{i=1}^m\int_{t_0}^t(\lambda+\langle \varphi,y(s)\rangle)_+\langle \varphi,\sigma_i(y(s))\rangle dx_i(s) \notag \\
=& - 2\int_{t_0}^t(\lambda+\langle \varphi,y(s)\rangle)_+\langle (-A)^\varepsilon \varphi,(-A)^{1-\varepsilon}y(s)\rangle ds  \notag\\
& +2\sum_{i=1}^m\lambda\langle \varphi,\sigma_i(y(t_0))\rangle (x_i(t)-x_i(t_0))+{\mathscr R}_f(t_0,t),
\label{invarianza_1}
\end{align}
for any $t\in[t_0,T]$ and any $\lambda>0$, where in the last equality we have used \eqref{spezzamento_young_int} with $f(t)=(\lambda+\langle \varphi,y(t)\rangle)_+\langle \varphi,\sigma_i(y(t))\rangle$ for $t\in[0,T]$. We recall that from \eqref{stima_resto_Rf} it follows that ${\mathscr R}_f(t_0,t)=o(|t-t_0|)$ as $t\rightarrow t_0^+$, and from Remark \ref{rmk:remark_teorema_principale}$(ii)$ we have $y\in C((0,T];X_\mu)$ for any $\mu\in[0,\eta+\alpha)$.

Now, we separately consider the cases $(i)$ and $(ii)$.

\vspace{2mm}
{${\bf (i)}$}. Fix $\psi\in \partial K\cap X_\zeta$ with $\eta\leq \zeta+\varepsilon\leq 1$. From \eqref{invarianza_1}, with $t_0=0$ we get
\begin{align}
0
\geq &- 2\int_0^t(\lambda+\langle \varphi,y(s)\rangle)_+\langle (-A)^\varepsilon \varphi,(-A)^{1-\varepsilon}y(s)\rangle ds  \notag\\
& +2\sum_{i=1}^m\lambda\langle \varphi,\sigma_i(\psi)\rangle (x_i(t)-x_i(0))+{\mathscr R}_f(0,t),
\label{invarianza_2}
\end{align}
for any $t\in[0,T]$. From \eqref{stima_sol_Xr} with $r=1-\varepsilon$ we infer that there exists a positive constant $c$, independent of $s$, such that
\begin{align}
\label{stima_inv_2}
\left|(\lambda+\langle \varphi,y(s)\rangle)_+\langle (-A)^\varepsilon \varphi,(-A)^{1-\varepsilon}y(s)\rangle\right|\leq \lambda cs^{\zeta+\varepsilon-1}, \quad s\in(0,T].
\end{align}
Let $\beta\in[\eta,1]$. By dividing both the sides of \eqref{invarianza_2} by $t^\beta$ and by $\lambda$, and taking \eqref{stima_inv_2} into account, the assertion follows easily.

{${\bf (ii)}$}. Fix $t_0\in [0,T)$ with $y(t_0)\in \partial K$ and $\zeta+\varepsilon>1$.
Since $1-\varepsilon<\zeta$ it follows that $y$ is continuous up to $0$ with values in $X_{1-\varepsilon}$. Indeed, from \eqref{stime_smgr}$(b)$ we get $|S(t)\psi-\psi|_{1-\varepsilon}\leq Ct^{\zeta+\varepsilon-1}$ for every $t\in [0,T]$, and from \eqref{reg_sol-aaa} and the smoothness of $\mathscr I_{S\sigma(y)}$ at $t=0$ we infer that $y\in C_b([0,T];X_{1-\varepsilon})$. As a consequence, the function
$s\mapsto (\lambda+\langle \varphi,y(s)\rangle)_+\langle (-A)^\varepsilon \varphi,(-A)^{1-\varepsilon}y(s)\rangle$ belongs to $C_b([0,T])$.
Let $\beta\in[\eta,1]$. Dividing \eqref{invarianza_1} by $|t-t_0|^\beta$ and $\lambda$, and letting $t\to t_0^+$, the assertion follows also in this case.
\end{proof}

\begin{remark} {\rm In \cite{CouMar16}, the problem of the invariance of a general convex set $K$ is treated for a ordinary differential equation driven by a rough path. In such a paper, the state space is finite dimensional and no unbounded operators are involved in the  equation but  both the noise, which is $\theta$-H\"older with $\theta>1/3$, and the set $K$ are much more general than here. In the present paper, we just wanted to show how the  availability of both a classical solution and a chain rule can be exploited to tackle the problem of the invariance of convex sets, when an unbounded operator $A$ is involved.
As a matter of fact, the general problem of the invariance of a convex set $K$ with respect to a general infinite dimensional evolution equation driven by a rough trajectory is still unexplored (see \cite{CaDa} and the references therein for corresponding results in the case of classical evolution equations). It is also worth mentioning that, in \cite{CouMar16}, necessary conditions for the invariance are proved under an assumption specifying the genuine rough behaviour of the path (i.e., requiring that $|x(t)-x(0)| \sim t^{\theta} $  as $t$ tends to $0$).}
\end{remark}

\section{An example}
\label{sect-examples}

In this section, we provide an example to which our results apply.

Let $A$ be the realization in $X=C_b(\R^d)$ of the second-order elliptic operator
\begin{eqnarray*}
{\mathcal A}=\sum_{i,j=1}^dq_{ij}D_{ij}+\sum_{j=1}^db_jD_j+c,
\end{eqnarray*}
with coefficients bounded and $\beta$-H\"older continuous on $\R^d$, for some $\beta\in (0,1/2)$, and $\sum_{i,j=1}^dq_{ij}(x)\xi_i\xi_j\ge\mu |\xi|^2$
for every $x,\xi\in\R^d$ and some positive constant $\mu$. Note that $D(A)=\Big\{u\in C_b(\R^d)\cap\bigcap_{p<+\infty}W^{2,p}_{\rm loc}(\R^d): {\mathcal A} u\in C_b(\R^d)\Big\}$.
For every $\alpha\in (0,2)\setminus\{1/2,1\}$, we take $X_{\alpha}=C^{2\alpha}_b(\R^d)$ endowed with the classical norm of $C^{2\alpha}_b(\R^d)$.
Moreover, we take as $X_{1/2}$ the Zygmund space of all bounded functions $g:\R^d\to\R$ such that $[g]=\displaystyle\sup_{x\neq y}\frac{|g(x)+g(y)-2g(2^{-1}(x+y))|}{|x-y|}<+\infty$ endowed with the norm $\|g\|_{X_{1/2}}=\|g\|_{\infty}+[g]$.
It is well known that $A$ generates an analytic semigroup on $C_b(\R^d)$ and $X_{\alpha}$ is the interpolation space of order $\alpha$ between $X$ and $X_1=D(A)$. We refer the reader to e.g., \cite[Chapters 3 and 14]{lor-rha}.
Finally, we fix a function $\hat\sigma\in C^2_b(\R)$ and note that the function $\sigma:X\to X$, defined by $\sigma(f)=\hat\sigma\circ f$ satisfies Hypothesis \ref{ip_nonlineare} is satisfied for every $\alpha\in (0,1/2)$, with
$L_{\sigma}^{\alpha}=\|\sigma\|_{{\rm Lip}(\R)}$. Since the assumptions of Theorem \ref{es_mild_sol_gub} are satisfied, we conclude that, for every $\psi\in C^{\alpha}_b(\R^d)$ ($\alpha\in (0,1)$), there exists a unique classical solution $y$ to Problem \eqref{omo_equation}.

\end{document}